\providecommand{\algorithmname}{Algorithm}
\setlist[itemize]{leftmargin=1em}
\setlist[enumerate]{leftmargin=1em}
\newcommand{\RR}{\mathbb{R}}
\definecolor{yxc}{RGB}{255,0,0}
\definecolor{cm}{RGB}{0,0,200}
\definecolor{kzw}{RGB}{0,150,0}
\def\ave{\textsf{ave}}
\newtheorem{condition}{Condition}
\theoremstyle{assumption}
\begin{document}
\theoremstyle{plain} \newtheorem{lemma}{\textbf{Lemma}} \newtheorem{prop}{\textbf{Proposition}}\newtheorem{theorem}{\textbf{Theorem}}\setcounter{theorem}{0}
\newtheorem{corollary}{\textbf{Corollary}} \newtheorem{assumption}{\textbf{Assumption}}
\newtheorem{example}{\textbf{Example}}
 \newtheorem{definition}{\textbf{Definition}}
\newtheorem{fact}{\textbf{Fact}} \theoremstyle{definition}
 \newtheorem{con}{\bf Condiction}
  \newtheorem{defi}{\bf Definition}

\theoremstyle{remark}\newtheorem{remark}{\textbf{Remark}}

\title{{Maximum likelihood estimation in the sparse Rasch model\thanks{The authors are listed in the alphabetical order.}}}

\author
{
Pai Peng\thanks{Department of Artificial Intelligence, Jianghan University, Wuhan, 430056, China.
\texttt{Emails:}pengpai@jhun.edu.cn.} \quad
Lianqiang Qu\thanks{Department of Statistics, Central China Normal University, Wuhan, 430079, China.
\texttt{Emails:}qulianq@ccnu.edu.cn.}
\quad
Qiuping Wang\thanks{Department of Statistics, Zhaoqing University, Zhaoqing, 526000, China.
\texttt{Emails:}qp.wang@mails.ccnu.edu.cn.}
\quad
	Shufang Wang\thanks{Department of Statistics, Central China Normal University, Wuhan, 430079, China.
\texttt{Emails:}Wangshufang2050@163.com}
\quad
Ting Yan\thanks{Department of Statistics, Central China Normal University, Wuhan, 430079, China.
\texttt{Emails:}tingyanty@mails.ccnu.edu.cn.} \quad
}


\maketitle
\begin{abstract}
The Rasch model has been widely used to analyse item response data in psychometrics and educational assessments.
When the number of individuals and items are large, it may be impractical to provide all possible responses.
It is desirable to study sparse item response experiments.
Here, we propose to use the Erd\H{o}s\textendash R\'enyi random sampling design,
where an individual responds to an item with low probability $p$.
We prove the uniform consistency of the maximum likelihood estimator 
when both the number of individuals, $r$, and the number of items, $t$, approach infinity.
Sampling probability $p$ can be as small as  $\max\{\log r/r, \log t/t\}$ up to a constant factor, which is
a fundamental requirement to guarantee the connection of the sampling graph by the theory of the Erd\H{o}s\textendash R\'enyi graph.
The key technique behind this significant advancement is a powerful
leave-one-out method for the Rasch model.
We further establish the asymptotical normality of the MLE by using a simple matrix to approximate the inverse of the Fisher information matrix.
The theoretical results are corroborated by simulation studies and an analysis of a large item-response dataset.

\medskip
\noindent\textbf{Keywords:}
Asymptotic normality; Consistency; Erd\H{o}s\textendash R\'enyi random sampling; Maximum likelihood estimation; Rasch model.
\end{abstract}

\section{Introduction}

Item response experiments are used in many fields, including psychometrics, educational assessments, and
health-related quality of life,  among others \cite[e.g.][]{birdsall2011implementing,chen2023SS,rutkowski2013handbook},
where a number of individuals are asked to respond to a set of items (e.g. test questions).
The Rasch model \citep{georg1960probabilistic} was one of the earliest item response theory models used to characterise the difficulties of items (i.e. latent traits) and the abilities of individuals.
It assumes that the probability of individual $i$ giving a correct response to item $j$ is:
\begin{equation}\label{Rasch-pij}
\mathbb{P}(~\mbox{individual $i$~correctly answers item $j$}~)=\frac{e^{\alpha_i-\beta_j}}{1+e^{\alpha_i-\beta_j}},
\end{equation}
where $\alpha_i$ and $\beta_j$ denote the ability parameter for individual $i$ and difficulty parameter for item $j$, respectively.
A larger $\beta_j$ indicates that it is more difficult to provide a correct response to item $j$.
Because of its apparent simplicity and ease of interpretation, the Rasch model has been widely employed to analyse item response data \cite[e.g.][]{fischer1978probabilistic,wright2004overview,bond2013applying}.

Since each individual or each item is assigned one intrinsic parameter,
the number of parameters increases with the number of individuals or items.
Therefore, statistical inference is non-standard in the Rasch model.
This has attracted significant interest from statisticians and psychologists for the investigation of its theoretical properties
\cite[e.g.][]{de1986maximum,fischer1995derivations,gurer2023penalization}.
\citet[][pages 261-263]{fischer1974einfuhrung} and \citet{haberman1977maximum} derived the necessary 
and sufficient condition for the existence and uniqueness of the
maximum likelihood estimator (MLE).
However, when the number of individuals or items is fixed, MLE suffers from inconsistency problems \citep{andersen1973conditional,ghosh1995inconsistent}.
Conditional or marginal maximum likelihood procedures have been developed to address this problem
\citep{andersen1972numerical,follmann1988consistent,agresti1993computing,robitzsch2021comprehensive}.
In a pioneering study, \citet{haberman1977maximum} established the consistency and asymptotical normality of the MLE under the condition that
each individual gives responses to all items,
when both the number of individuals and items simultaneously go to infinity.

The Rasch model is closely related to the $\beta$-model of undirected graphs
\citep{Chatterjee2011random,Yan2013clt}, the $p_0$ model for directed graphs \citep{yan2016asymptotics}, the bipartite
$\beta$-model of affiliation networks \citep{fan2023asymptotic, wang2022two},
 and the Bradley--Terry model for paired comparisons \citep{bradley-terry1952}, in which all probability distributions for binary responses have a logistic regression representation.
In particular, sparse paired comparisons designs under the Erd\H{o}s\textendash R\'enyi random sampling schedule are studied in the Bradley--Terry model
\citep{chen2019spectral,han2020asymptotic,chen2022partial}, where the uniform consistencies of the MLE or the regularised MLE are established.

When the number of individuals, $r$, and the number of items, $t$, are large,  it may be impractical to provide all possible responses.
As an illustrating example, the Riiid AIEd train data set \citep{riiid-test-answer-prediction} used in the AAAI-2021 workshop on AI Education contains  $393,656$ individuals and $10,000$ items,
where the response rate is only $2.6\%$.
This issue is referred to as sparsity, which poses significant challenges in the theoretical analysis of the MLE under the Rasch model.
To address this issue,
we consider an Erd\H{o}s\textendash R\'enyi response design in the Rasch model,
where an individual gives response to an item with a low probability $p$.
This is called the {\it sparse Rasch model}. With this design, the total number of experiments is reduced to $rtp$.

The contributions of this study to the literature are as follows.
First, we prove the uniform consistency of the MLE by developing a leave-one-out method to analyse the
properties of the MLE and regularised MLE in the Rasch model,
where the smallest sampling probability $p$ can be as small as $\log r/r$ up to a constant factor.
By the theory of the Erd\H{o}s\textendash R\'enyi graph, this is a fundamental requirement for guaranteeing  the connection of the sampling graph  with high probability,
which is a necessary condition for the MLE existence.
Second, we show that the MLE converges to a normal distribution by using a simple matrix to approximate
the inverse of the Fisher information matrix.
Finally, we perform numerical simulations to evaluate the performance of MLE in finite samples, which agree with our theories.
We also apply our theoretical results to analyse large item response data.

It is noted that \cite{JMLR-Chen-2023} derived consistency and asymptotical normality of the MLE under a general missing-entry setting that does
not require a random sampling scheme\footnote{We noticed this paper after we finish our work, whose Chinese version is Shufang Wang's master thesis beginning in 2021.}.
When applying their results to the Erd\H{o}s\textendash R\'enyi sampling design, it is required that $p\gg (\log r/r)^{1/2}$ as mentioned in their paper.
Here, our results are much sharper, allowing that $p$ is close to the Erd\H{o}s\textendash R\'enyi lower bound.
On the other hand, their asymptotical variances are different from ours since the conditions of model identification are different.
We shall elaborate the differences in more details after presenting our main results (see Remarks \ref{remark-a} and \ref{remark-b}).
Additionally, our proof strategies are different from theirs: (1) we use the leave-one-out method to prove consistency while \cite{JMLR-Chen-2023} employs the fixed point theorems of
\citeauthor{akilov1964functional} (\citeyear{akilov1964functional}, pages 695-711); (2) we obtain asymptotic normality by approximating
the inverse of the Fisher information matrix while \cite{JMLR-Chen-2023} uses a complex three-way decomposition technique of the coefficients
lying in a constrained solution space.
We also note another related study.
\cite{yang2024randompairing} proposed random pairing maximum
likelihood estimator by transforming individual-item responses to item-item comparisons by randomly pairing responses of the same user to different items, and proved its consistency.
This work is different from ours, where we directly work on the maximum likelihood estimation.
In addition, the asymptotic distribution is not investigated in \cite{yang2024randompairing}.

The remainder of this paper is organised as follows.
Section \ref{sec-main} presents the main results, including
the MLE in Section \ref{subsec-mle}, its consistency in Section \ref{subsec-consis},
and its asymptotic normality in Section \ref{subsec-asym-norm}.
Numerical experiments are conducted to validate the theoretical findings in Section \ref{sec-simulation}.
Further discussions are presented in Section \ref{sec-discussion}.
The proofs for our main results are given in Section \ref{sec-appendix}, while the proofs of the supported lemmas are presented in the Supplementary Material.

{\emph{Notations.}}
For an integer $n$, we use $[n]$ to denote set $\{1,2,\ldots, n\}$.
For two positive numbers $a_n$ and $b_n$,  $a_n=O\left(b_n\right)$ or $a_n\lesssim b_n$ indicate that $a_n\le C b_n$ for a fixed constant $C>0$,
 $a_n=\Omega\left(b_n\right)$ or $a_n\gtrsim b_n$ denotes $a_n\ge Cb_n$;  $a_n=\Theta\left(b_n\right)$ or $a_n\asymp b_n$ denotes $a_n\lesssim b_n$; and $a_n\gtrsim b_n$;  $a_n=o(b_n)$ denotes  $\lim_{n\rightarrow\infty}\frac{a_n}{b_n}=0$.
For vector $x \in {\mathbb{R}^n}$, denote $L_1$-norm, $L_2$-norm, and $L_\infty$-norm by ${\left\| x \right\|_1} = \sum\nolimits_{i = 1}^n {\left| {{x_i}} \right|}$, $\left\| x \right\|_2^2 = \sum\nolimits_{i = 1}^n {x_i^2} $, and
${\left\| x \right\|_\infty } = {\max _{1 \le i \le n}}\left| {{x_i}} \right|$, respectively.
For a matrix $A=(a_{ij})_{i,j\in[n]}\in\mathbb{R}^{n\times n}$, we denote
${\left\| A \right\|_{\max}}:=\max\nolimits_{i,j\in[n]} |a_{ij}|$ by the maximum absolute entry-wise norm of matrix $A$.
Let $\mathds{1}_n$ be an $n$-dimensional column vector of all vectors, and
$\left\{\bm{e}_{i}\right\}_{1\leq i\leq n}$ denote the standard basis vector of $\RR^n$.
For any $x\in\mathbb{R}^n$, we write $\ave(x)=n^{-1}\mathds{1}_{n}^{\top}x$.
We use $C_i$ and $c_i$, $i=0,1,\ldots$, to denote universal constants that may change from place to place.

\section{Main results}
\label{sec-main}

Assume that there are $r$ individuals and $t$ items engaged in response experiments.
Let $[r]=\{1,\ldots, r\}$ and $[t]=\{1,\ldots, t\}$ denote sets of individuals and items, respectively.
We consider an Erd\H{o}s\textendash R\'enyi random response design, where we treat whether an individual responds to an item as a Bernoulli random variable with a successful probability $p$. Here, $p$ may depend on $r$ or $t$; the subscript is suppressed.
We recast all individuals and items into a bipartite graph  with a node set $[r+t]$, where the first $r$ nodes, that is, $1,\ldots, r$, represent individuals, and the remaining $t$ nodes, $r+1, \ldots, r+t$, represent items.
Let $X_{i,j+r}\in\{0,1\}$ be an indicator variable that denotes  whether individual $i$ responds to item $j$. If individual $i$ answers item $j$, then $X_{i,j+r}=X_{j+r,i}=1$;
otherwise, $X_{i,j+r}=X_{j+r,i}=0$. Note that $X_{i,j+r} \sim \mathrm{Bernoulli}(p)$.
We collect these in a matrix $X=\{X_{i,j}\}_{i,j\in [r+t] }$, which can be viewed as the adjacency matrix of an Erd\H{o}s\textendash R\'enyi bipartite random graph $G_{r,t}$. Let $\mathcal{E}$ be the collection of all edges in graph $G_{r,t}$.
Remarkably, $X_{i,j}=0$ for all $1\le i,j\le r$ and $r+1\le i, j \le r+t$.

Let $A=(a_{i,j})_{ (r+t) \times (r+t)}$ denote the outcome matrix recording information on whether individuals give correct responses to items, conditional on response graph $G_{r,t}$.
That is, if individual $i$ correctly answers item $j$, then $a_{i,j+r}=1$; otherwise, $a_{i,j+r}=0$.
In a random-sampling scenario, the Rasch model assumes that
all responses are independent
Bernoulli random variables with:
\begin{equation}
\label{eq:rasch model}
\mathbb{P} \left( {{a_{i,j}} = 1|{X_{i,j}} = 1} \right) = \frac{{{e^{{\alpha _i} - {\beta _{j }}}}}}{{1 + {e^{{\alpha _i} - {\beta _{j }}}}}},
\end{equation}
given response graph $G_{r,t}$.
As previously mentioned, $\alpha_i$ measures the ability of individual $i$ and $\beta _{j}$ the difficulty of item $j$.
The larger $\alpha_i$ is, the easier it is for individual $i$ to answer items, and vice versa.
Similar to the definition of $X_{i,j}$, note that $a_{i,j}=0$ for all $1\le i,j\le r$ and all $r+1\le i, j \le r+t$.

For convenience, we write:
\[
\theta=\left(\theta_{1}, \cdots, \theta_{r+t}\right)^{\top}=(\alpha_1, \ldots, \alpha_r, \beta_{1}, \ldots, \beta_{t})^\top.
\]
The Rasch model implies that the probability of a correct response depends only on the differences between an individual's ability parameters and an item's difficulty parameters. Specifically,
adding the same constant to $\alpha_i$ and $\beta_j$ results in the same probability, as in \eqref{eq:rasch model}. The following model identification conditions are required:
\begin{equation}
\label{eq:model-iden-a}
\theta_1 = 0,
\end{equation}
or
\begin{equation}
\label{eq:model-iden-b}
\sum_{i=1}^{r+t} \theta_i = 0.
\end{equation}
These two conditions are equivalent in that they can be re-parameterized into each other without changing the probability in \eqref{eq:rasch model}.

\subsection{Maximum likelihood estimation}
\label{subsec-mle}

We write:
\[
\mu(x) = \frac{ e^x }{ 1 + e^{x} }.
\]
Under sparse Rasch model \eqref{eq:rasch model}, the negative log-likelihood function conditional on $\mathcal{G}_{r,t}$ is given by:
\begin{equation}
\label{negative-likelihood-fu}
\begin{array}{rcl}
\ell (\theta ) &  = &  - \sum\limits_{~~\scriptstyle(i,j+r) \in {\cal E},\hfill\atop
\scriptstyle i \in [r],j \in [t]\hfill} {\left\{ {\left. {{a_{i,j+r}}\log \mu ({\alpha _i} - {\beta_{j}}) + (1 - {a_{i,j+r}})\log (1 - \mu ({\alpha_i} - {\beta _{j}})} \right\}} \right.}
\\
&  = &  - \sum\limits_{~~\scriptstyle(i,j+r) \in {\cal E},\hfill\atop
\scriptstyle i \in [r],j \in [t]\hfill} {\left\{ {\left. {{a_{i,j+r}}\log \mu ({\theta _i} - {\theta _{j+r}}) + (1 - {a_{i,j+r}})\log (1 - \mu ({\theta _i} - {\theta _{j + r}})} \right\}} \right.}.
\end{array}
\end{equation}
We define the MLE that minimises the negative log-likelihood function $\ell (\theta)$ as:
\begin{equation}
\label{def-mle}
\hat{\theta }=\underset{\theta \in {{\mathbb{R}}^{r+t}}}{\mathop{\arg \min }}\,{\ell  }(\theta ).
\end{equation}
Therefore, if $\hat{\theta}$ exists, then it must satisfy maximum likelihood equations:
\renewcommand{\arraystretch}{1.5}
\begin{equation}
\label{eq-mle-equa}
\begin{array}{rcl}
\sum_{j \in [t], (i,j+r)\in {\cal E} } a_{ij}   & = & \sum_{j \in [t], (i, j+r)\in {\cal E} }
a_{ij} \mu( \hat{\theta}_i - \hat{\theta}_{j+r}), ~~i \in [r]
\\
\sum_{i \in [r], (i,j)\in {\cal E} }  a_{ij}  & = &  \sum_{i \in [r], (i,j)\in {\cal E} } a_{ij} \mu( \hat{\theta}_i - \hat{\theta}_j), ~~j \in \{r+1,\ldots, r+t\}.
\end{array}
\end{equation}

\subsection{Consistency of the MLE}
\label{subsec-consis}

We explain below the basic conditions for this study.

\begin{condition}
\label{condi-t-r}
There are two fixed positive numbers $c_1$ and $c_2$ such that $c_2 \le  t/r  \le c_1$.
Furthermore, without loss of generality, we assume $t\ge r$ throughout this study.
\end{condition}

\begin{condition}
\label{condi-pr}
The sampling probability satisfies $p \geq \frac{c_{0}\log r}{r}$,
where $c_0$ denotes a sufficiently large constant.
\end{condition}

Condition \ref{condi-t-r} is mild in several item response situations. For instance,
there are $2,587$ people and $2,125$ items in the DuoLingo data set in \cite{wu2020variational},
where $r/t=1.2$.
Condition $t\ge r$ is used only to simplify the proofs, and all results still hold when $t<r$ under Condition \ref{condi-t-r}.

Condition \ref{condi-pr} is a fundamental requirement to guarantee the good properties of the MLE.
According to the theory of the Erd\H{o}s--R\'enyi graph \citep{erdHos1960evolution},  random bipartite graph $G_{r,t}$ is disconnected with a high probability when $p < [(1-\epsilon) {\log r}]/{r}$ for any $\epsilon>0$.
In a disconnected $G_{r,t}$, at least one of the following two cases exists:
(1) induced subgraphs $G_1$ containing nodes $N_{r1}\subset [r]$ and $N_{t1}\subset [t]$
and  $G_2$ containing nodes $N_{r2}\subset [r]$ and $N_{t2}\subset [t]$
 have no intersecting edges  for the two disjoint sets of individuals $N_{r1}$ and $N_{r2}$ and two disjoint sets of items $N_{t1}$ and $N_{t2}$; and
 (2) at least one isolated node with no edges connects to any other node in  $G_{r,t}$.
In the first case, multiplying the parameters associated with $G_1$ by a positive constant and dividing the parameters associated with $G_1$ by the same constant reduces the negative log-likelihood function such that the MLE does not exist. For the latter case,
the nonexistence of the MLE is clear.

We define $\theta_{\max}=\max_{i\in [r+t] } \theta_i$ and $\theta_{\min}=\min_{j\in [r+t] } \theta_j$.
Additionally, we define $\kappa = \theta_{\max} - \theta_{\min}$. We use superscript ``*" to denote the true parameter, that is,
$\theta^* \in \mathbb{R}^{r+t}$ denotes the true parameter vector of $\theta$.
The consistency of the MLE can be described as follows.

\begin{theorem}[{\bf Existence and entrywise error of the MLE}]
\label{th:MLE-main}
Under Conditions \ref{condi-t-r} and \ref{condi-pr}, if $\kappa = O(1)$, then
as $r\to\infty$, $\hat\theta$ defined in \eqref{def-mle} exists and satisfies:
\begin{eqnarray}
\label{eq:theorem-con-a}
\|  \hat{\theta}   - \theta^* - \mathsf{ave}(\hat{\theta}   - \theta^*) \|_2^2  &  \lesssim &  \frac{\log r}{p},
\\
\label{eq:theorem-con-b}
\max_{i\in[r+t]}|\hat\theta_i-\theta_i^* - \mathsf{ave}(\hat{\theta}   - \theta^*)|^2  & \lesssim &  \frac{\log r}{r p }
\end{eqnarray}
with a probability of at least $1-O(r^{-7})$, where $\mathsf{ave}(x)$ denotes the average value of vector $x$.
Furthermore, if $\hat\theta$ exists, it is unique.
\end{theorem}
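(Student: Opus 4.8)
The plan is to analyse the MLE through the stationarity equation $\nabla\ell(\hat\theta)=0$ and its first-order expansion around $\theta^*$, exploiting that $\ell$ is convex and that its Hessian is a weighted Laplacian of the bipartite sampling graph $G_{r,t}$. Each summand of $\ell$ is convex in $\theta_i-\theta_{j+r}$, so $\ell$ is convex and invariant under the shift $\theta\mapsto\theta+c\mathbf{1}$. Writing
\[
H(\theta)=\nabla^2\ell(\theta),\qquad H(\theta)_{i,\,j+r}=-\mu'(\theta_i-\theta_{j+r})\,\mathds{1}\{(i,j+r)\in\mathcal E\},
\]
$H(\theta)$ is the Laplacian of $G_{r,t}$ with edge weights $\mu'(\theta_i-\theta_{j+r})\in(0,1/4]$. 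When the relevant parameters have $O(1)$ range these weights are bounded below by a constant $c>0$, so $H(\theta)\succeq c\,L(G_{r,t})$ with $L$ the unweighted Laplacian. By Erd\H{o}s\textendash R\'enyi spectral theory, under Condition~\ref{condi-pr} the graph is connected and its algebraic connectivity obeys $\lambda_2(L(G_{r,t}))\gtrsim rp$ with probability $1-O(r^{-7})$; hence $\ell$ is strictly convex on $\mathbf{1}^\perp$. Together with a coercivity argument — each node has $\gtrsim\log r$ neighbours and, with high probability, both correct and incorrect responses, which rules out data separation — this gives existence of the minimiser and its uniqueness after centring.

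For the $L_2$ bound, set $\tilde\theta=\hat\theta-\theta^*-\mathsf{ave}(\hat\theta-\theta^*)\in\mathbf{1}^\perp$. A mean-value expansion gives $\nabla\ell(\theta^*)=-H(\bar\theta)(\hat\theta-\theta^*)$ for some $\bar\theta$ on the segment; projecting onto $\mathbf{1}^\perp$ and using $H\mathbf{1}=0$ yields $\|\tilde\theta\|_2\le\|\nabla\ell(\theta^*)\|_2/\lambda_2(H(\bar\theta))$. The score $\nabla\ell(\theta^*)$ has mean-zero, bounded, edge-independent coordinates $a_{ij}-\mu(\theta_i^*-\theta_{j+r}^*)$; a Bernstein argument bounds $\|\nabla\ell(\theta^*)\|_2^2\lesssim(r+t)\,rp$ at the required probability. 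Combining with $\lambda_2(H(\bar\theta))\gtrsim rp$ controls $\|\tilde\theta\|_2$ and yields \eqref{eq:theorem-con-a}.

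The harder claim is the entrywise bound \eqref{eq:theorem-con-b}, where the leave-one-out method enters. For each node $m$ I would introduce an auxiliary estimator $\hat\theta^{(m)}$, the minimiser of a modified likelihood in which every datum incident to $m$ (both the sampling indicators $X_{m\cdot}$ and the responses) is replaced by its conditional mean, so that $\hat\theta^{(m)}$ is independent of all randomness attached to node $m$. A one-step expansion of the coordinate-$m$ equation reads schematically $H(\bar\theta)_{mm}(\hat\theta_m-\theta_m^*)=\nabla_m\ell(\theta^*)-\sum_{j\sim m}\mu'(\cdot)(\hat\theta_{j+r}-\theta_{j+r}^*)$. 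Replacing $\hat\theta$ by $\hat\theta^{(m)}$ in the sum turns it into a sum of sampling indicators times quantities independent of node $m$, which concentrates at the sharp rate $O(\sqrt{\log r/(rp)})$ after a union bound over the $r+t$ nodes; the replacement error is governed by a perturbation bound on $\|\hat\theta-\hat\theta^{(m)}\|_2$, itself controlled by the spectral gap and the leave-one-out independence. With $H(\bar\theta)_{mm}\asymp rp$ this gives \eqref{eq:theorem-con-b}, and summing the squared entrywise bounds over the $r+t\asymp r$ nodes re-derives the $L_2$ rate in \eqref{eq:theorem-con-a}.

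The main obstacle is the entrywise analysis. I must (i) prove that $\|\hat\theta-\hat\theta^{(m)}\|_2$ is small enough that substituting the leave-one-out surrogate costs less than the target rate, which requires both estimators to lie in a region where $H$ is uniformly well-conditioned and the weights stay bounded below; and (ii) establish $\lambda_2(H)\gtrsim rp$ uniformly near the connectivity threshold $p\asymp\log r/r$, where degrees barely concentrate and the Laplacian spectrum is delicate. Breaking the dependence between $\hat\theta$ and each node's data — the role of the leave-one-out construction — is the crux that lets $p$ reach the Erd\H{o}s\textendash R\'enyi lower bound rather than the $p\gg(\log r/r)^{1/2}$ needed by cruder arguments.
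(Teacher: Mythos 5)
Your architecture for the two error bounds is essentially the paper's: a spectral-gap lower bound $\lambda_{\min,\perp}(\nabla^2\ell)\gtrsim rp/\kappa$ on the weighted Laplacian plus a gradient concentration bound gives \eqref{eq:theorem-con-a}, and a leave-one-out estimator decoupled from node $m$'s data, combined with a perturbation bound and a bootstrap, gives \eqref{eq:theorem-con-b}. The one structural difference is cosmetic: you build the auxiliary estimator by replacing node $m$'s data with conditional means (as in the Bradley--Terry analyses you are implicitly following), whereas the paper simply drops all terms incident to $m$ and minimises $\ell^{(-m)}$ over an $\ell_\infty$-constrained set; both achieve the needed independence, and the paper's $S_{m,1},S_{m,2},S_{m,3}$ decomposition of $\varphi^{(m)}(\omega_m^*\mid\hat\omega_{-m})$ is exactly your ``substitute, concentrate, control the substitution error'' step.

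The genuine gap is the existence step and, inseparably from it, the a priori entrywise boundedness of $\hat\theta$. Every spectral-gap invocation in your sketch --- at the mean-value point $\bar\theta$ on the segment joining $\theta^*$ and $\hat\theta$, at the leave-one-out minimisers, and in the perturbation bound for $\|\hat\theta-\hat\theta^{(m)}\|_2$ --- requires the edge weights $\mu'(\bar\theta_i-\bar\theta_{j+r})$ to be bounded below by a constant, which holds only if $\|\hat\theta-\theta^*\|_\infty=O(1)$ is already known. That is precisely the quantity you are trying to bound, so the argument as written is circular. The paper breaks the circle with a dedicated construction occupying most of Section \ref{subsec-exis-MLE}: a regularised MLE $\hat\omega_\lambda$ with $\lambda=(r+t)^{-1}$, analysed through a gradient-descent sequence started at $\omega^*$ whose iterates are shown, by a separate leave-one-out induction, to stay within $\ell_\infty$-distance $2$ of the truth; this yields $\|\hat\omega_\lambda-\omega^*\|_\infty\le4$, and a restricted-MLE comparison then shows the unconstrained minimiser exists and satisfies $\|\hat\omega-\omega^*\|_\infty\le5$ \emph{before} any rate analysis begins. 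Your proposal contains no substitute for this machinery; ``convexity plus connectivity plus coercivity'' does not supply it, and your coercivity criterion (each node has both correct and incorrect responses w.h.p.) is anyway weaker than the Fischer--Haberman non-separation condition actually needed for existence. Two smaller points: the identity $\nabla\ell(\theta^*)=-H(\bar\theta)(\hat\theta-\theta^*)$ with a single $\bar\theta$ is not valid for vector-valued gradients (use the integral form of the Hessian, or Taylor-expand the scalar $\ell$ with Lagrange remainder as the paper does); and your claimed score bound $\|\nabla\ell(\theta^*)\|_2^2\lesssim(r+t)rp$ without a logarithmic factor needs its own justification at the probability level $1-O(r^{-7})$, though the paper's cruder bound $Crtp\log r$ already suffices for the stated rate.
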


\begin{remark}
\label{remark-a}
\cite{JMLR-Chen-2023} studied the Rasch model under general missing data mechanisms.
Specifically, they assume that $X_{i,j}$ denotes the missing status of entry $a_{i,j}$, where $X_{i,j} = 1$ indicates that $a_{i,j}$ is observed and
$X_{i,j} = 0$ otherwise. Therefore, their missing data mechanisms allow sparse item response experiments.
We compare their consistency results with ours. 
In Theorem 4 of \cite{JMLR-Chen-2023}, they give sufficient conditions for guaranteeing the consistency of the MLE
under the same Erd\H{o}s--R\'enyi sampling scheme, where $X_{i,j}$ are i.i.d. Bernoulli random variables with $\mathbb{P}(X_{i,j} =
1) = p$. To compare their conditions with ours, we reproduce their conditions: (1) $r p \ge  tp \ge (\log(r))^4$;
(2) for an integer $n \ge 1$, $p^n t^{(n-1)/2} r^{(n-1)/2} - \log (rt) \to \infty$
if $n$ is odd, and $p^n t^{n/2} r^{(n/2)-1} - 2\log r \to\infty$ if $n$ is even.
As mentioned in their papers, consider the setting $r = t$ and let $n = 2$, the above conditions requires $p^2r - 2 \log(r) \to \infty$, which implies
$p \gg \sqrt{\log r/ r}$. In addition, their condition can not allow $n=1$ since $p\le 1$. On the other hand, for larger $n$, their conditions imply that the restriction on $p$ will become stronger.
In sharp contrast to their conditions, we allow that $p$ is close to the Erd\H{o}s--R\'enyi lower bound ($p\ge c_0\log r/r$ for some constant $c_0$), where this is the smallest probability to guarantee that
the sampling graph is connected with high probability.
\end{remark}

\begin{remark}
When $p=1$, the consistency rates of the MLEs are the same order of $(\log r/r)^{1/2}$, which
matches the optimal convergence rates as discussed in \cite{JMLR-Chen-2023}.
Additionally, this also matches
the oracle inequality $\|\hat{\beta} - \beta\|_\infty = O_p(  (\log p/N)^{1/2} )$ for the Lasso estimator
in the linear model with $p$-dimensional true parameters $\beta$ and the sample size $N$ (e.g., \cite{Lounici2008}).
The assumption $\kappa=O(1)$ is made in \cite{JMLR-Chen-2023}, and \cite{chen2022partial} for studying the Bradley--Terry model for paired comparisons.
\end{remark}

\subsection{Asymptotic normality of the MLE}
\label{subsec-asym-norm}

Here, we use $\theta_1=0$ in \eqref{eq:model-iden-a} for model identification.
This can be achieved by subtracting $\theta_1$ from each element in $\theta$.
Let $V=(v_{ij})_{i,j\in[r+t]\backslash\{1\}}$ be a Hessian matrix of $\ell(\theta)$ with respect to $\theta_1=0$,
where
\begin{equation*}
\begin{array}{c}
v_{i,i} = \sum\limits_{j \in [t] } X_{i,j+r} \frac{ e^{\theta_i - \theta_{j+r}} }{ (1 + e^{\theta_i - \theta_{j+r}} )^2 },
~ i\in [r],
\quad
v_{j+r,j+r} = \sum\limits_{i \in [r] } X_{i,j+r} \frac{ e^{\theta_i - \theta_{j+r}} }{ (1 + e^{\theta_i - \theta_{j+r}} )^2 },
~ j\in [t],
\\
v_{i,j+r} = -  {X_{i,j+r}}\frac{ e^{\theta_i - \theta_{j+r}} }{ (1 + e^{\theta_i - \theta_{j+r}} )^2 },~ i\in [r], j\in [t],
\\
v_{i,j}=0, ~ 1\le i\neq j \le r; r+1 \le i\neq j \le r+t.
\end{array}
\end{equation*}
Remarkably, matrix $V$ is also a Fisher information matrix for parameter vector $(\theta_2, \ldots, \theta_{r+t})$.
The asymptotic variance of $\hat{\theta}$ depends crucially on the inverse of $V$.
However, $V^{-1}$ does not have an explicit solution in general.
We use a simple matrix $S$ to approximate $V^{-1}$, where $S=(s_{ij})_{i,j\in [r+t]\backslash \{1\}}$ and
 $s_{ij}=\delta_{ij}/v_{ii} + 1/v_{11}$. Here, $\delta_{ij}$ denotes the Kronecker delta function, where
 $\delta_{ij}=1$ if $i=j$ and otherwise $\delta_{ij}=0$.
 The approximation error of $V^{-1} -S$ in terms of the absolute entrywise norm is given by Lemma \ref{le:V-matrix-bound}.
We now formally state the asymptotic distribution of $\hat\theta$.

\begin{theorem}[{\bf Central limit theorem}]
\label{th:CL}
Under Conditions \ref{condi-t-r} and \ref{condi-pr}, if $\kappa=O(1)$, then, for any $s\ge2$, when $r\rightarrow \infty$, vector $\left(\hat\theta_2 - {\theta_2^*},\ldots,\hat\theta_s - {\theta_s^*}\right)$ asymptotically follows a multivariate normal distribution
with zero mean and  covariance matrix $\Sigma=(\sigma_{ij})$, where
$\sigma_{ij}=\delta_{ij}/v_{ii} + 1/v_{11}$.
\end{theorem}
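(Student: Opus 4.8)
The plan is to linearize the stationarity condition $\nabla\ell(\hat\theta)=\zero$ about the truth $\theta^*$ and turn the inverse-Hessian expression into the explicit diagonal-plus-rank-one form encoded by $S$. Throughout I would condition on the high-probability event, inherited from Theorem~\ref{th:MLE-main} together with standard Erd\H{o}s--R\'enyi degree concentration, on which $G_{r,t}$ is connected, every degree is $\Theta(rp)$ so that $v_{ii}=\Theta(rp)$ for all $i$, and the entrywise bounds \eqref{eq:theorem-con-a}--\eqref{eq:theorem-con-b} hold; after normalizing both $\hat\theta$ and $\theta^*$ by $\theta_1=0$ these give $\|\delta\|_\infty\lesssim(\log r/(rp))^{1/2}$ for $\delta:=\hat\theta-\theta^*$. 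Writing $g=\nabla\ell(\theta^*)$ and using $\nabla\ell(\hat\theta)=\zero$, an integral-form Taylor expansion yields $g+\widetilde V\delta+\widetilde R=\zero$, where $\widetilde V$ is the full $(r+t)\times(r+t)$ Fisher information at $\theta^*$ and $\widetilde R$ is the second-order remainder. Since $\delta_1=0$, deleting the first coordinate gives $\delta_{[2:]}=-V^{-1}g_{[2:]}-V^{-1}\widetilde R_{[2:]}$, and by Lemma~\ref{le:V-matrix-bound} it suffices to analyze the leading term $-(Sg_{[2:]})_k$ and to show the corrections $-\big((V^{-1}-S)g_{[2:]}\big)_k$ and $-(V^{-1}\widetilde R_{[2:]})_k$ are $o_p((rp)^{-1/2})$ for each fixed $k\le s$.

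For the leading term I would exploit the translation invariance $\ell(\theta+c\mathds{1})=\ell(\theta)$ of the likelihood, which forces $\sum_{m=1}^{r+t}g_m\equiv0$ and $\widetilde V\mathds{1}=\zero$. The first identity gives $\sum_{m\ge2}g_m=-g_1$, so the rank-one part of $S$ collapses and $(Sg_{[2:]})_k=g_k/v_{kk}-g_1/v_{11}$. Each $g_m$ is an edge-sum of the independent, centered, uniformly bounded variables $a_{i,j+r}-\mu(\theta_i^*-\theta_{j+r}^*)$, and a direct computation shows the conditional covariance of the score equals $\widetilde V$ exactly; hence $\Var(g_k/v_{kk}-g_1/v_{11})=1/v_{kk}+1/v_{11}+O((rp)^{-2})$ and, for $k\ne l$, the covariance equals $1/v_{11}+O((rp)^{-2})$, matching $\Sigma$ up to a factor $1+o(1)$. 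As $s$ is fixed, I would conclude joint asymptotic normality by the Cram\'er--Wold device and a Lyapunov central limit theorem for the triangular array of edge variables: each summand is $O(1/(rp))$ while the variance is $\Theta(1/(rp))$, so the Lyapunov ratio is $O((rp)^{-1/2})\to0$, and this step remains valid down to the boundary $p\asymp\log r/r$.

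The hard part will be controlling the remainder $V^{-1}\widetilde R_{[2:]}$. Coordinatewise $|\widetilde R_k|\lesssim\sum_j X_{k,j+r}(\delta_k-\delta_{j+r})^2$, which the bound on $\|\delta\|_\infty$ only controls at the level $\lesssim(\text{degree of }k)\cdot\|\delta\|_\infty^2\lesssim\log r$ --- too coarse to discard against the target scale $(rp)^{-1/2}$ after dividing by $v_{kk}\asymp rp$. The key structural saving, which I would again extract from $\widetilde V\mathds{1}=\zero$ and $\sum_m g_m\equiv0$, is that $\mathds{1}^\top\widetilde R=-\mathds{1}^\top g-\mathds{1}^\top\widetilde V\delta=0$, so $\sum_{m\ge2}\widetilde R_m=-\widetilde R_1$ and the dangerous rank-one contribution of $S$ cancels, leaving $(S\widetilde R_{[2:]})_k=\widetilde R_k/v_{kk}-\widetilde R_1/v_{11}$. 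Combined with the entrywise estimate for $V^{-1}-S$ in Lemma~\ref{le:V-matrix-bound} and $\ell_1$/$\ell_2$ control of $g$ and $\widetilde R$, the aim is $o_p((rp)^{-1/2})$ for both corrections. The delicacy is that this forces a comparison of order $\log r/(rp)$ against $(rp)^{-1/2}$, which is tight precisely near the connectivity threshold; making it go through requires a sharper, non--worst-case bound on $\widetilde R_k$ --- e.g.\ controlling $\sum_m\widetilde R_m^2$ through the quadratic form $\delta^\top L\delta$ and the consistency rate of \eqref{eq:theorem-con-a} rather than through $\|\delta\|_\infty$ alone. Assembling the leading term and the negligible corrections and invoking Slutsky's theorem then gives the stated $N(\zero,\Sigma)$ limit.
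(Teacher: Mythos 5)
Your architecture is the same as the paper's: Taylor-expand the likelihood equations about $\theta^*$ to obtain (in the paper's notation) $\bm{h}-\mathbb{E}(\bm{h})=V(\hat\theta-\theta^*)+\bm{g}$, replace $V^{-1}$ by the diagonal-plus-rank-one matrix $S$ via Lemma~\ref{le:V-matrix-bound}, prove a CLT for the linear term $S(\bm{h}-\mathbb{E}(\bm{h}))$ (the paper's Lemma~\ref{lemma-mul-clt}; your Cram\'er--Wold/Lyapunov argument with the exact conditional covariance computation and the cancellation $\sum_m g_m=0$ is the right way to prove it), and show that the corrections $(V^{-1}-S)(\bm{h}-\mathbb{E}(\bm{h}))$ and $V^{-1}\times(\text{second-order remainder})$ are negligible. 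Your observation that $\mathds{1}^\top\widetilde R=0$, which collapses the rank-one part of $S\widetilde R$ to $-\widetilde R_1/v_{11}$, is correct and plays the same role as the paper's direct bound on $\sum_{i\ge 2}g_i$.

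The genuine gap is the one you flag yourself and then do not close: the negligibility of the correction terms at the sparsity level permitted by Condition~\ref{condi-pr}. Your worst-case estimate $|\widetilde R_k|\lesssim d_k\|\delta\|_\infty^2\lesssim\log r$ gives $\widetilde R_k/v_{kk}\lesssim\log r/(rp)$, which must beat the target scale $(rp)^{-1/2}$; that requires $rp\gg(\log r)^2$ and fails when $p\asymp\log r/r$. The same happens for the $(V^{-1}-S)$ correction, where the entrywise bound of Lemma~\ref{le:V-matrix-bound} combined with Hoeffding-type control of $\bm{h}-\mathbb{E}(\bm{h})$ forces $p$ to exceed a power of $\log r/r$ strictly larger than the connectivity threshold. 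The ``sharper, non--worst-case bound on $\widetilde R_k$ through $\delta^\top L\delta$'' that you invoke is never constructed, and it is not clear it exists: even using $\|\delta\|_2^2\lesssim\log r/p$ from \eqref{eq:theorem-con-a} in the quadratic form does not improve the per-coordinate estimate below $\log r/(rp)$. To be fair, the paper's own proof has exactly the same limitation --- it bounds $\|V^{-1}\bm{g}\|_\infty=O_p(\log r/(rp^2))$ and the cross term by $O_p(\sqrt{\log r}/(rp^{3/2}))$ and then declares them $o_p(r^{-1/2})$ only under additional side conditions on $p$ (garbled in the text as $p\gg(r/\log r)^{1/4}$ and the like) that are strictly stronger than Condition~\ref{condi-pr}. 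So you have correctly located the weak point, but as written your proposal does not prove the theorem down to the connectivity threshold; you must either supply the missing sharp remainder bound or add an explicit stronger hypothesis on $p$ under which the last two steps actually go through.
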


\begin{remark}
\label{remark-b}
The asymptotic variance of $\hat{\theta}_i$ is $1/v_{ii} + 1/v_{11}$ for $i\ge 2$, which is in the order of $1/(tp)$ for $i=2,\ldots,r$ and $1/(rp)$ for $i=r+1,\ldots, r+t$
when all parameters are constant.
Under a different model identification condition $\sum_{i=1}^r \alpha_i=0$,  \cite{JMLR-Chen-2023} shows the asymptotic variance of $\hat{\theta}_i$ is $1/v_{ii}$.
The asymptotic variance of the MLE in \cite{haberman1977maximum} is implicit and involved with computing a supremum of a function.
\end{remark}

\section{Numerical studies}
\label{sec-simulation}

Here, we present numerical experiments conducted to
evaluate the performance of the MLE in the sparse Rasch model.

\subsection{Simulations}

We first investigate the errors in the MLEs in terms of the $\ell_\infty$-norm distance.
In the Erd\H{o}s\textendash R\'enyi random response design, we choose three values for $p$:
$p_1(s) = s^{-1/5}$, $p_2(s)=s^{-1/3}$, $p_3(s) = \frac{2\log s}{ s}$,
to measure the different sparsity levels, where $s$ may be $r$ or $t$.
For the individual merit parameters $\alpha_i$, $i=1,\ldots,r$ and item difficulty parameters $\beta_j$, $j=1,\ldots,t$, we set
$\alpha_i \sim U(-1/2, 1/2)$, and $\beta_j \sim N(0, 1/4)$.
To ensure model identification, we replace $\alpha_i$ with $\alpha_i - \alpha_1$
and $\beta_j$ with $\beta_j - \alpha_1$.

To observe how the error bound of the MLE changes with $r$ or $t$, we consider two scenarios:
(1) $t=1500$ and $r=500, 1000, 1500, 2000, 2500$; and (2) $r=1000$, $t=1000, 1500, 2000, 2500$, $3000$.
We repeat $50$ simulations for each combination $(r,t)$, and record three average values of
$\|\hat{\theta} - \theta\|_\infty$, $\|\hat{\alpha} - \alpha\|_\infty$ and $\|\hat{\beta} - \beta \|_\infty$.
The simulation results are presented in Figure \ref{fig:error}.

\begin{figure}[!htb]
\centering
\subfigure[$\ell_\infty$-norm errors under a fixed $t=1500$ and a varying $r$]{\includegraphics[width=0.9\textwidth, height=6.5cm]{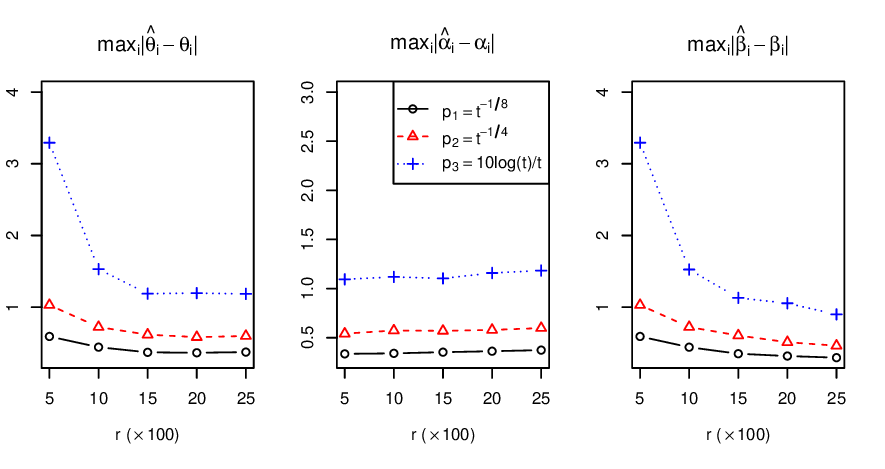}}
\subfigure[$\ell_\infty$-norm errors under a fixed $r=1000$ and a varying $t$]{\includegraphics[width=0.9\textwidth, height=6.5cm]{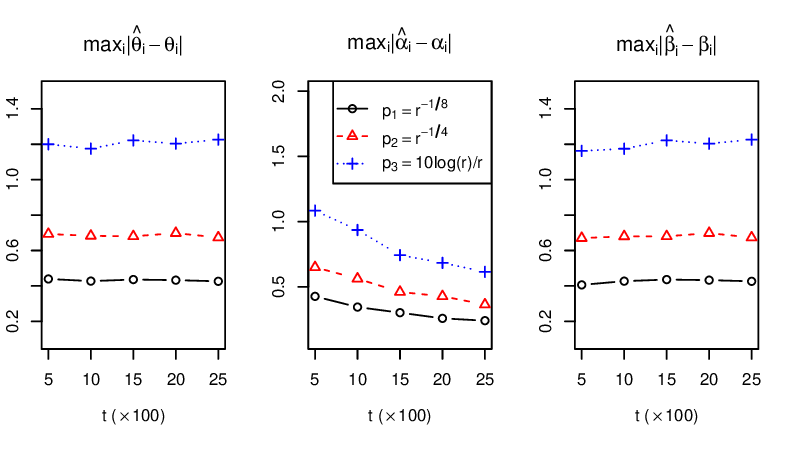}}
\caption{
The plots are the errors of the MLEs under different $p$, $r$, and $t$.
The horizontal axis shows the number of individuals $r$ in the first row or the number of items $t$ in the second row.
The vertical axis in each plot shows the average values of $\|\hat{\theta} - \theta \|_\infty$ in the first column,
$\|\hat{\alpha}  - \alpha \|_\infty$ in the second column,
and $\|\hat{\beta} - \beta \|_\infty$ in the third column.
}
\label{fig:error}     
\end{figure}

From Figure \ref{fig:error}, we observe that the errors increase when $p$ decreases when
for a fixed combination $(r,t)$.
Figure \ref{fig:error} (a) shows the error trend when $r$ varies and $t$ is fixed.
The values of $\|\hat{\theta} - \theta \|_\infty$ and $\|\hat{\alpha}  - \alpha \|_\infty$ decrease when $r$ increases.
When $p=t^{-1/8}$ and $p=t^{-1/4}$, the errors are close to zero when $r$ increases to $2500$.
However, the error in $\|\hat{\beta} - \beta \|_\infty$ does not exhibit an obvious decreasing trend as $r$ increases.
This may be because $t$ is maintained at a fixed value.
A similar phenomenon is observed in the plots of $\|\hat{\alpha} - \alpha\|_\infty$ in Figure \ref{fig:error} (b),
where the number of individuals $r$ is fixed.
Figure \ref{fig:error} (b) shows that the values of $\|\hat{\theta} - \theta \|_\infty$ and $\|\hat{\beta}  - \beta \|_\infty$ decrease as $t$ increases.

Next, we evaluate the asymptotic normality of the MLE.
The parameter values for $\alpha$, $\beta$ and $p$ are set as previously described.
According to the central limit theorem of MLE, Theorem \ref{th:CL}, both
\[
\xi_{i,j} := \frac{ [\hat{\alpha}_i-\hat{\alpha}_j-(\alpha_i^*-\alpha_j^*)]}{ \sqrt{
1/\hat{v}_{i,i}+1/\hat{v}_{j,j} }  }, ~~ i,j\in [r]
\]
and
\[
\eta_{i,j} := \frac{ [\hat{\beta}_i-\hat{\beta}_j-(\beta_i^*-\beta_j^*)]}{ \sqrt{
1/\hat{v}_{r+i, r+i}+1/\hat{v}_{r+j,r+j} } }, ~~ i,j\in [t]
\]
converge to the standard normal distribution, where $\hat{v}_{i,i}$ is the estimate of $v_{i,i}$ obtained by replacing $\theta^*$ with $\hat{\theta}$.
Recall that we set $\alpha_1=0$ for model identification.
We choose several pairs $(i,j)$: $(i,j)=(2,3), (r/2,r/2+1), (r-1,r)$ for $\xi_{i,j}$ and
 $(i,j)=(1,2), (t/2,t/2+1), (t-1,t)$ for $\eta_{i,j}$.
We report $95\%$ coverage frequencies for $\alpha_i - \alpha_j$ and $\beta_i-\beta_j$ as well as their confidence interval lengths.
We also draw quantile--quantile plots for $\xi_{i,j}$ and $\eta_{i,j}$. Each simulation is repeated $1000$ times.

\begin{figure}[!htpb]
\centering{
\includegraphics[width=0.9\textwidth, height=3in]{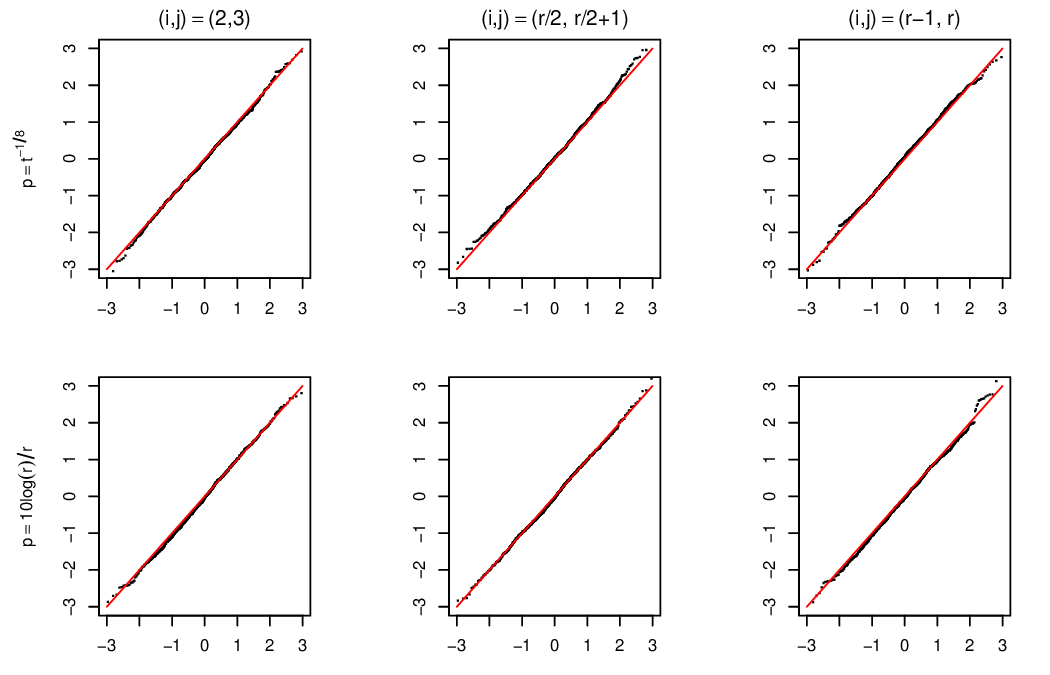}
}
\caption{ QQ-plots of $\xi_{i,j}$ under $(r,t)=(1000,1000)$.}
\label{figure-a}
\end{figure}

Figure \ref{figure-a} shows the empirical quantiles of $\xi_{i,j}$ against the theoretical quantiles of standard normality when $r=t=1000$, where the first and second rows correspond to  $p=t^{-1/8}$ and $p=10\log t/t$, respectively.
In this figure, the horizontal and vertical axes represent the quantile of standard normality and
the empirical quantile. The red line corresponds to baseline $y=x$
Most of the quantile-quantile plots of $\xi_{i,j}$ are close to the red diagonal line,
except for a slight derivation at the tail of the plots in the bottom-right subfigure.
This indicates that the asymptotic approximation is accurate. The plots for $\eta_{i,j}$ are similar and are not shown to save space.

Coverage frequencies of the $95\%$ confidence interval and the length of the confidence interval
are presented in Table \ref{tab1}. This table shows that most coverage frequencies are close to the target level of $95\%$.
This means that the asymptotic approximations are still very good, even when $p$ is very small. Note that, when $p=10\log t/t$, the network density
of the bipartite graph $\mathcal{G}_{r,t}$ is approximately $0.06$ when $r=t=1000$.
However, when $r$ and $t$ are fixed, the length of the confidence interval increases as $p$ decreases, as expected.
This is because fewer observations exist when $p$ decreases.

\begin{table}[!htbp]
\begin{center}
\centering
\setlength\tabcolsep{5pt}
\renewcommand{\arraystretch}{1.25}
\begin{footnotesize}
\begin{tabular}{l ccc ccc ccc}
  \hline
  \vspace{0.5mm}
&     && \multicolumn{3}{c}{ pair $(i,j)$ for $\xi_{i,j}$ }       && \multicolumn{3}{c}{ pair $(i,j)$ for $\eta_{i,j}$ }
     \\
      \cline{4-6}                              \cline{8-10}
$(r,t)$ & $p$  &&   $(2,3)$ & $(r/2, r/2+1)$ & $(r-1, r)$ & & $(1,2)$  & $(t/2,t/2+1)$  & $(t-1,t)$
\\
$(1000,1000)$  & $t^{-1/8}$   &&$94.6 ( 0.29 ) $&$ 94.9 ( 0.29 ) $&$ 95 ( 0.29 ) $&&$ 95.1 ( 0.291 ) $&$ 94.6 ( 0.291 ) $&$ 94 ( 0.291 ) $ \\
     & $t^{-1/4}$ &&$94 ( 0.449 ) $&$ 93.9 ( 0.448 ) $&$ 96.1 ( 0.448 ) $&&$ 94.7 ( 0.451 ) $&$ 95.7 ( 0.449 ) $&$ 94.2 ( 0.45 ) $ \\
     & $\frac{10\log t}{t} $&&$95 ( 0.727 ) $&$ 95 ( 0.728 ) $&$ 94.7 ( 0.729 ) $&&$ 94.9 ( 0.732 ) $&$ 95 ( 0.734 ) $&$ 93.3 ( 0.733 ) $ \\
  \vspace{0.5mm}
$(1000,1500)$  & $t^{-1/8}$   && $94.9 ( 0.242 ) $&$ 96.4 ( 0.242 ) $&$ 93.4 ( 0.242 ) $&&$ 95.8 ( 0.298 ) $&$ 95 ( 0.298 ) $&$ 94.2 ( 0.297 ) $ \\
     & $t^{-1/4}$ && $93.4 ( 0.385 ) $&$ 94.7 ( 0.385 ) $&$ 94 ( 0.385 ) $&&$ 94.7 ( 0.474 ) $&$ 94.4 ( 0.474 ) $&$ 94.7 ( 0.472 ) $ \\
     & $\frac{10\log t}{t} $&& $95.2 ( 0.71 ) $&$ 93.3 ( 0.709 ) $&$ 94.5 ( 0.711 ) $&&$ 95.8 ( 0.879 ) $&$ 94.3 ( 0.877 ) $&$ 95.3 ( 0.876 ) $ \\
    \hline
\end{tabular}
\end{footnotesize}
\end{center}
\caption{The reported values are the coverage frequencies ($\times 100\%$) of the $95\%$ confidence interval for a pair $(i,j)$ with
 the length of the confidence interval between parentheses.}
\label{tab1}
\end{table}

\subsection{Real data analysis}
\label{sec:real-data}

We use the public Riiid AIEd data set \citep{riiid-test-answer-prediction} as an illustrating application, available at \url{https://www.kaggle.com/c/riiid-test-answer-prediction/data}.
As mentioned before, this data set contains a very large number of individuals and items.
Actually, the size of the full data set is over $5$GB.
Because of computational cost and large required memory space for big response matrices, it is very time-consuming to compute the MLE for the full data set.
So we sampled randomly $1000$ items and then selected those individuals giving responses to at $300$ items amongst the $1000$ items.
A total number of $5485$ individuals was selected.
We treat the response matrix from an Erd\H{o}s\textendash R\'enyi random graph, where the sampling probability is $59\%$.
The minimum and maximum values of degrees for individuals are $89$ and $1467$, respectively, whereas those for items are $96$ and $3990$, respectively.
The estimates for $\hat{\alpha}_i$ and $\hat{\beta}_j$ and their estimated standard errors are listed in Table \ref{tab2}, where only the top $5$ 
individuals and items are selected for presentation because of an excess of the estimated nodes.
Here, we set the estimate for the ability parameter of individual ``1509564249" with the minimum estimate to zero.
The estimates of parameters for people range from a minimum $0$ to a maximum $4.39$
whereas those of the parameters for items range from a minimum $0.41$ to a maximum $1.67$.
We plot the histogram for $\hat{\alpha}_i$ and $\hat{\beta}_j$ in Figure \ref{figure-b}.
As we can see, there are evident differences for the ability of individuals and the difficulty of items.
We conduct a Wald-type statistic for testing whether there is significant difference amongst the top $5$ individuals or $5$ items in Table \ref{tab2}.
The results show that there are no significant difference for the top $5$ individuals or items.

\begin{figure}[!htpb]
\centering{
\includegraphics[width=0.9\textwidth, height=2in]{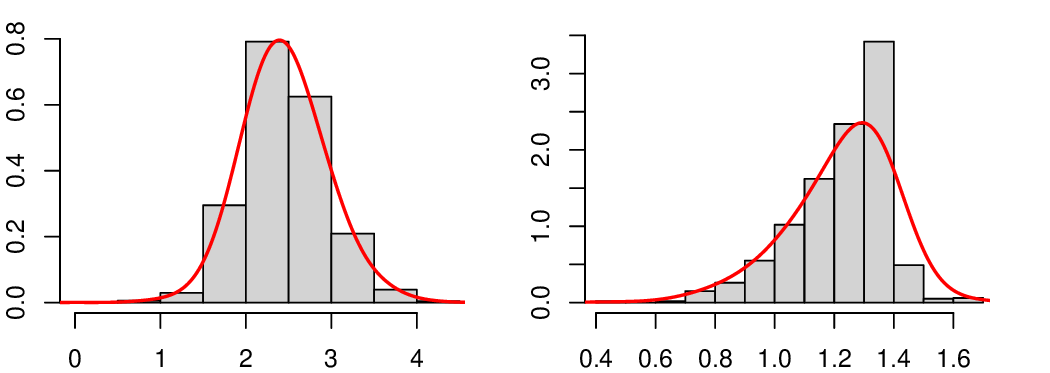}
}
\caption{The histograms for the MLEs $\hat{\alpha}_i$ in the left and $\hat{\beta}_j$ in the right. The red line is the density curve. }
\label{figure-b}
\end{figure}

\begin{table}[!htpb]
\begin{center}
\caption{Estimators of $\hat{\alpha}_i$, $\hat{\beta}_j$ and their standard errors.}
\label{tab2}
\centering
\setlength\tabcolsep{5pt}
\renewcommand{\arraystretch}{1.25}
\begin{footnotesize}
\begin{tabular}{cccc c cccc}
\hline
  {People ID} & {Degree} & {$\hat{\alpha}_i$} & {$\hat{\sigma}_i$} & & {Item ID} & {Degree} & {$\hat{\beta}_j$} & {$\hat{\sigma}_j$}
   \vspace{1mm}  \\
\hline
$524304795 $&$ 610 $&$ 4.39 $&$ 0.21 $&&$ 33 $&$ 3655 $&$ 1.67 $&$ 0.08$ \\
$1223856469 $&$ 942 $&$ 4.23 $&$ 0.17 $&&$ 31 $&$ 3650 $&$ 1.67 $&$ 0.08$ \\
$581787196 $&$ 121 $&$ 4.2 $&$ 0.39 $&&$ 79 $&$ 3666 $&$ 1.63 $&$ 0.08$ \\
$2056466762 $&$ 121 $&$ 4.2 $&$ 0.39 $&&$ 50 $&$ 3672 $&$ 1.62 $&$ 0.08$ \\
$1045247032 $&$ 951 $&$ 4.19 $&$ 0.16 $&&$ 58 $&$ 3700 $&$ 1.62 $&$ 0.08$
 \\
\hline
\end{tabular}
\end{footnotesize}
\end{center}
\end{table}

\section{Discussion}
\label{sec-discussion}

We studied the statistical properties of the well-known Rasch model in sparse-item response experiments under the
Erd\H{o}s\textendash R\'enyi random sampling design. In particular, we established the uniform consistency of the MLE
and obtained its asymptotic normality 
when both the number of individuals $r$ and number of items $t$ go to infinity.
The sampling probability $p$ in the
Erd\H{o}s\textendash R\'enyi graph can be as small as  $\log r/r$ up to a constant factor, where the MLE is still consistent.
This is a fundamental requirement to guarantee the existence of the MLE. 

Several issues need to be addressed in future studies.
First, we assume $c_1\le r/t\le c_2$ for the two positive constants $c_1$ and $c_2$.
In other applications, the number of items may be relatively small, such that $r/t \to \infty$.
In this case, the degrees of the individuals were far smaller than those of the items,
which leads to very different upper bounds of $d_i$ and other quantities in the proofs.
It is unclear whether the current proof strategies can be extended.
However, investigating this issue is beyond the scope of the present study.
Second, we assumed that the individual ability parameter $\alpha_i$ and item difficulty parameter $\beta_j$
are bounded above by a constant. To determine whether this assumption can be relaxed, we performed some additional simulations.
The results showed that the MLE performs well for a wide range of parameters.
Furthermore, the asymptotic behaviour of the estimator depends not only on $\kappa$ but also on the configuration of the parameters.
It will be interesting to investigate this issue in future studies.

\section{Appendix}
\label{sec-appendix}

\subsection{Preliminaries}
\label{subsec-pre}

The gradient of negative log-likelihood function ${\ell}(\theta  )$ defined in \eqref{negative-likelihood-fu}, can be written as:
\begin{align}\label{eq:gradient}
\nabla {\ell}(\theta )=\sum\limits_{i \in [r]} {\frac{{\partial \ell (\theta )}}{{\partial {\theta _i}}}}\cdot {e_i}  + \sum\limits_{j\in [t]} {\frac{{\partial \ell (\theta )}}{{\partial {\theta _{j+r}}}}}\cdot {e_{j+r}},
\end{align}
where $e_1, \ldots, e_{r+t}$ represent the standard basis vectors in ${\mathbb{R}^{r+t}}$.
In the above equation:
\begin{eqnarray*}
\label{eq:gradient-i}
\frac{{\partial \ell (\theta )}}{{\partial {\theta _i}}} & = & \sum\limits_{(i,j) \in {\mathcal E},j\in [t]} {\left. {\left\{ {\mu ({\theta _i} - {\theta_{j+r}}) - {a_{i,j+r}}} \right.} \right\}},~~~~i \in [r],
\\
\label{eq:gradient-j}
\frac{{\partial \ell (\theta )}}{{\partial {\theta _{j+r}}}} & = &
-\sum\limits_{(i,j) \in {\mathcal E},i \in [r]} {\left. {\left\{ {\mu ({\theta _i} - {\theta_{j+r}}) - {a_{i,j+r}}} \right.} \right\}},~~j\in [t].
\end{eqnarray*}
Furthermore, the Hessian matrix of ${\ell}(\theta)$ can be written as:
\begin{align}
\label{eq:Hessian}
{\nabla ^2}{\ell}(\theta ) = \sum\limits_{(i,j) \in {\mathcal E},i \in [r],j\in [t]} {\mu^\prime({\theta_i-\theta_{j+r}})\left( {{e_i} - {e_{j+r}}} \right){{\left( {{e_i} - {e_{j+r}}} \right)}^\top }},
\end{align}
where $\mu^\prime(x) = e^x/( 1 + e^x )^2$.
Because $\mu^\prime(x)$ is a symmetric function, we have:
\[
\frac{ e^{ \alpha_i - \beta_j } }{ (1 + e^{\alpha_i - \beta_j } )^2 } = \frac{ e^{-|\alpha_i-\beta_j|} }{ (1 + e^{-|\alpha_i-\beta_j|} )^2 }
\ge \frac{1}{4} e^{-|\alpha_i-\beta_j|}.
\]
It follows that, when $\| \theta - \theta^* \|_\infty \le C$,  for all $i\in [r], j\in [t]$, we have:
\begin{equation}
\label{ineq-mu-bn-cn}
\frac{1}{4\kappa e^{2C} } \le \mu^\prime({\theta_i-\theta_{j+r}}) \le \frac{1}{4},
\end{equation}
by noticing that $x(1-x) \le \frac{1}{4}$ when $ x\in (0, 1)$
and
\[
|\alpha_i - \beta_j | \le \theta_{\max}^* - \theta_{\min}^* + 2 \|\theta - \theta^* \|_\infty \le \log \kappa + 2C.
\]

For convenience, we transform parameter $\theta_i$ to $\omega_i=\theta_i-\bar{\theta}$ for each $i\in [r+t]$,
where $\bar{\theta}$ is the mean of all $\theta_i$; that is, $\bar{\theta} = (\sum_i \theta_i )/(r+t)$.
We denote $\omega=(\omega_1, \ldots, \omega_{r+t})^\top$.
Observe that
\begin{equation}
\label{eq:keshibie}
\omega_i-\omega_{j+r}=\theta_i-\theta_{j+r}~~~\text{and}~~~\sum_{i\in [r+t]}{ \omega_i}=0.
\end{equation}
and the likelihood function is made invariable by replacing $\theta$ with $\omega$.
Then, we define:
\begin{equation}
\label{eq:hat-omega}
\hat{\omega }={\mathop{\arg \min }}_{\omega \in \mathbb{R}^{r+t} }\,{\ell  }(\omega ).
\end{equation}

To establish the existence and consistency of $\hat{\omega}$, we present some lemmas below, whose proofs are provided in the supplementary material.

\begin{lemma}\label{le:A0}
Let $d_i$ be the degree of node $i$ in the bipartite Erd\H{o}s\textendash R\'enyi random graph $G_{r,t}$, $d_{\min} = \min_{i \in [r+t]} d_i $ and $d_{\max} =  \max_{i \in [r+t]} d_i $.
Suppose that conditions \ref{condi-t-r} and \ref{condi-pr} hold true. Then, for sufficiently large $c_0$, the event
\begin{align} \label{eq:A0}
\mathcal{A}_0:= \left\{ {\left. {\frac{{rp}}{2} \le {d_{{\rm{min}}}} \le {d_{{\rm{max}}}} \le \frac{{3tp}}{2}} \right\}} \right.
\end{align}
holds with a probability of at least $1 - O({r^{ - 10}})$.
\end{lemma}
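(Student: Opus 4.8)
The plan is to treat each node's degree as a sum of independent Bernoulli$(p)$ indicators, apply multiplicative Chernoff bounds to its lower and upper tails, and then union-bound over all $r+t$ nodes. First I would record the two degree distributions. For an individual $i\in[r]$, the degree $d_i=\sum_{j\in[t]}X_{i,j+r}$ is a sum of $t$ independent Bernoulli$(p)$ variables, so $\mathbb{E}[d_i]=tp$; for an item node $j+r$, the degree $d_{j+r}=\sum_{i\in[r]}X_{i,j+r}$ is a sum of $r$ such variables, so $\mathbb{E}[d_{j+r}]=rp$. Since $t\ge r$ by Condition~\ref{condi-t-r}, the item degrees carry the smallest mean $rp$ and the individual degrees the largest mean $tp$. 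This ordering tells us that the items govern the lower tail $d_{\min}$ and the individuals govern the upper tail $d_{\max}$, which is exactly why the target event $\mathcal{A}_0$ is phrased with $rp/2$ as the floor and $3tp/2$ as the ceiling.

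For the lower bound I would invoke the standard multiplicative Chernoff inequality $\mathbb{P}(D\le(1-\delta)\mathbb{E}[D])\le\exp(-\delta^2\mathbb{E}[D]/2)$ with $\delta=1/2$. For an item node this gives $\mathbb{P}(d_{j+r}<rp/2)\le\exp(-rp/8)$; for an individual node, since $\mathbb{E}[d_i]=tp\ge rp$, the same bound yields $\mathbb{P}(d_i<rp/2)\le\mathbb{P}(d_i<tp/2)\le\exp(-tp/8)\le\exp(-rp/8)$. Hence every node falls below $rp/2$ with probability at most $\exp(-rp/8)$. Symmetrically, for the upper bound I would use $\mathbb{P}(D\ge(1+\delta)\mathbb{E}[D])\le\exp(-\delta^2\mathbb{E}[D]/3)$ with $\delta=1/2$: for an individual this gives $\mathbb{P}(d_i>3tp/2)\le\exp(-tp/12)$, while for an item, because $3tp/2\ge 3rp/2$, we get $\mathbb{P}(d_{j+r}>3tp/2)\le\mathbb{P}(d_{j+r}>3rp/2)\le\exp(-rp/12)$. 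Thus every node exceeds $3tp/2$ with probability at most $\exp(-rp/12)$.

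Finally I would union-bound over all $r+t$ nodes. By Condition~\ref{condi-t-r} we have $r+t\le(1+c_1)r=O(r)$, so
\[
\mathbb{P}(\mathcal{A}_0^c)\le(r+t)\bigl(\exp(-rp/8)+\exp(-rp/12)\bigr)\le Cr\exp(-rp/12).
\]
Condition~\ref{condi-pr} supplies $rp\ge c_0\log r$, whence $\exp(-rp/12)\le r^{-c_0/12}$ and therefore $\mathbb{P}(\mathcal{A}_0^c)\le Cr^{1-c_0/12}$; taking $c_0\ge 132$ makes this $O(r^{-10})$, which proves the claim. (Any constant slightly larger than $132$ also gives the stronger $O(r^{-10})$ bound stated in Lemma~\ref{le:A0}.)

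There is no substantial obstacle here: the argument is a textbook concentration-plus-union-bound. The only points requiring care are the bookkeeping—identifying via the ordering $rp\le tp$ that items control the lower tail and individuals the upper tail—and calibrating $c_0$ so that the linear factor $r$ produced by the union bound is absorbed by the exponential decay. This calibration is precisely the reason Condition~\ref{condi-pr} demands that $c_0$ be a sufficiently large constant.
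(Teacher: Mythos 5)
Your proof is correct and is essentially the standard argument the paper relies on: Chernoff bounds on the Binomial$(t,p)$ and Binomial$(r,p)$ degrees, monotonicity via $t\ge r$ to reduce both tails to the worst-case mean, and a union bound over $r+t=O(r)$ nodes absorbed by taking $c_0$ large enough in Condition~\ref{condi-pr}. The bookkeeping (items control $d_{\min}$, individuals control $d_{\max}$, and $c_0\ge 132$ suffices for the $O(r^{-10})$ rate) is all in order.
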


\begin{lemma}\label{le:min-max-eigenvalue}
Under Conditions \ref{condi-t-r} and \ref{condi-pr}, for a sufficiently large constant $c_0>0$, we have:
\begin{equation*}
P\bigg(\lambda_{\max }\left( {{\nabla }^{2}}{{\ell }}(\omega^* ) \right)\le\frac{3}{4}tp\bigg)\ge 1-O(r^{-10}),
\end{equation*}
where $\lambda_{\max}(A)$ is the largest eigenvalue of $A$.
\end{lemma}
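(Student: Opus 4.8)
The plan is to bound the largest eigenvalue directly through the Rayleigh quotient characterization,
\[
\lambda_{\max}\big(\nabla^2 \ell(\omega^*)\big) = \max_{\|x\|_2 = 1} x^\top \nabla^2 \ell(\omega^*)\, x,
\]
and to exploit the fact that the Hessian in \eqref{eq:Hessian} is a \emph{weighted graph Laplacian} of the bipartite sampling graph $G_{r,t}$. Indeed, expanding the edge-sum form of \eqref{eq:Hessian} gives, for any $x \in \mathbb{R}^{r+t}$,
\[
x^\top \nabla^2 \ell(\omega^*)\, x = \sum_{(i,j+r) \in \mathcal{E},\, i\in[r],\, j\in[t]} \mu'(\omega_i^* - \omega_{j+r}^*)\,(x_i - x_{j+r})^2,
\]
so the problem reduces to controlling a nonnegative quadratic form indexed by the edges of $G_{r,t}$.

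First I would bound the edge weights. Since $\mu'(x) = e^x/(1+e^x)^2 \le 1/4$ for every real $x$, each weight satisfies $\mu'(\omega_i^* - \omega_{j+r}^*) \le 1/4$ deterministically; crucially, by \eqref{eq:keshibie} we have $\omega_i^* - \omega_{j+r}^* = \theta_i^* - \theta_{j+r}^*$, so re-centering does not change the weights, and this uniform bound does not even invoke $\kappa = O(1)$. Next I would apply the elementary inequality $(x_i - x_{j+r})^2 \le 2(x_i^2 + x_{j+r}^2)$ to each edge term and regroup the resulting sum by vertex, which converts the edge sum into a degree-weighted vertex sum:
\[
\sum_{(i,j+r) \in \mathcal{E}} (x_i - x_{j+r})^2 \le 2 \sum_{k \in [r+t]} d_k\, x_k^2 \le 2\, d_{\max} \|x\|_2^2,
\]
where $d_k$ denotes the degree of node $k$ in $G_{r,t}$. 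Combining the weight bound $\mu'\le 1/4$ with this estimate gives $x^\top \nabla^2 \ell(\omega^*)\, x \le \tfrac{1}{2}\, d_{\max}\,\|x\|_2^2$, hence $\lambda_{\max}(\nabla^2 \ell(\omega^*)) \le \tfrac{1}{2}\, d_{\max}$.

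The only probabilistic ingredient is the upper bound on the maximum degree, which is already provided by Lemma \ref{le:A0}: on the event $\mathcal{A}_0$, holding with probability at least $1 - O(r^{-10})$, we have $d_{\max} \le \tfrac{3tp}{2}$. Substituting this yields $\lambda_{\max}(\nabla^2 \ell(\omega^*)) \le \tfrac{1}{2}\cdot \tfrac{3tp}{2} = \tfrac{3}{4}tp$ on $\mathcal{A}_0$, which is precisely the claimed bound, with the stated probability $1 - O(r^{-10})$. In this sense there is no genuine obstacle in the lemma itself: the argument is deterministic once we condition on $\mathcal{A}_0$, and the entire probabilistic difficulty has been offloaded to the degree concentration established in Lemma \ref{le:A0}. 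The only point requiring minor care is that the constants in the crude Laplacian bound $\lambda_{\max} \le \tfrac12 d_{\max}$ must be matched against the degree bound $d_{\max}\le \tfrac{3tp}{2}$ so that the product lands exactly at $\tfrac34 tp$; a sharper bound via matrix concentration on the adjacency matrix of $G_{r,t}$ is available but unnecessary here, since the elementary estimate already suffices.
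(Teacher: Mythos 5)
Your proof is correct and follows what is essentially the paper's route: bound each edge weight by $\mu'\le 1/4$, use the Laplacian estimate $\sum_{(i,j+r)\in\mathcal{E}}(x_i-x_{j+r})^2\le 2d_{\max}\|x\|_2^2$ to get $\lambda_{\max}(\nabla^2\ell(\omega^*))\le d_{\max}/2$ deterministically, and then invoke the degree bound $d_{\max}\le 3tp/2$ from Lemma \ref{le:A0} on the event $\mathcal{A}_0$, which is exactly how the constant $\tfrac12\cdot\tfrac{3}{2}=\tfrac34$ arises. No gaps.
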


\begin{lemma}\label{le:second-min-eigenvalue}
Let
\[
\lambda_{\min, \perp}(A) = \min\{ \mu | z^\top A z \ge \mu \|z\|_2^2 \mbox{~for all $z$ with $\mathbf{1}^\top z =0$} \}.
\]
That is, it has the smallest eigenvalue when restricted to vectors orthogonal to $\mathbf{1}$.
Under Conditions \ref{condi-t-r} and \ref{condi-pr}, for a sufficiently large constant $c_0>0$, we have:
\begin{equation*}
P\bigg({\lambda }_{\min, \perp} \left( {{\nabla }^{2}}{{\ell }}( \omega^* ) \right)\ge \frac{1}{ 4\kappa }rp\bigg)\ge 1-O(r^{-10}).
\end{equation*}
\end{lemma}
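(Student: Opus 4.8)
The plan is to compare the Hessian at the truth with the combinatorial Laplacian of the sampling graph $G_{r,t}$ and then to control that Laplacian by a concentration argument. Evaluating the representation \eqref{eq:Hessian} at $\omega^*$ and using the lower bound in \eqref{ineq-mu-bn-cn} with $C=0$ (since $\|\omega^*-\omega^*\|_\infty=0$), every edge weight satisfies $\mu'(\omega_i^*-\omega_{j+r}^*)\ge \tfrac{1}{4\kappa}$. Hence, as quadratic forms,
\[
\nabla^2\ell(\omega^*)\succeq \frac{1}{4\kappa}\,L,\qquad L:=\sum_{(i,j+r)\in\mathcal{E}}(e_i-e_{j+r})(e_i-e_{j+r})^\top,
\]
where $L$ is the unweighted Laplacian of $G_{r,t}$. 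Since both matrices annihilate $\mathbf{1}$, restricting to $\mathbf{1}^\perp$ gives $\lambda_{\min,\perp}(\nabla^2\ell(\omega^*))\ge \tfrac{1}{4\kappa}\,\lambda_{\min,\perp}(L)$, so the whole problem reduces to showing $\lambda_{\min,\perp}(L)\gtrsim rp$ with high probability.

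First I would isolate the signal by passing to the expectation. Because the edges are i.i.d.\ $\mathrm{Bernoulli}(p)$, a direct computation gives $\mathbb{E}[L]=p\,L_{K_{r,t}}$, the scaled Laplacian of the complete bipartite graph $K_{r,t}$. The Laplacian spectrum of $K_{r,t}$ is the classical set $\{0,\,r^{(t-1)},\,t^{(r-1)},\,r+t\}$, whose smallest nonzero eigenvalue is $\min\{r,t\}=r$ by Condition \ref{condi-t-r} ($t\ge r$). Therefore $z^\top\mathbb{E}[L]z\ge rp\,\|z\|_2^2$ for every $z\perp\mathbf{1}$, i.e.\ $\lambda_{\min,\perp}(\mathbb{E}[L])=rp$.

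Next I would add back the fluctuation. Writing $L=\mathbb{E}[L]+E$ with $E=L-\mathbb{E}[L]$, for any $z\perp\mathbf{1}$ with $\|z\|_2=1$ we have $z^\top L z\ge rp-\|E\|$, where $\|\cdot\|$ denotes the spectral norm; so it suffices to prove $\|E\|=o(rp)$. Splitting $E=(D-\mathbb{E}D)-(A-\mathbb{E}A)$ into its diagonal (degree) part and its off-diagonal (adjacency) part, I would bound $\|D-\mathbb{E}D\|=\max_i|d_i-\mathbb{E}d_i|=O(\sqrt{tp\log r})$ by a Bernstein tail plus a union bound (the same machinery underlying Lemma \ref{le:A0}), and $\|A-\mathbb{E}A\|=O(\sqrt{tp})$ by a random-matrix concentration bound for the centered adjacency matrix of the bipartite Erd\H{o}s--R\'enyi graph. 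Using $t\asymp r$ and $rp\ge c_0\log r$ then gives $\|E\|=O(\sqrt{tp\log r})=O(rp/\sqrt{c_0})$, a small fraction of $rp$ once $c_0$ is large, so $\lambda_{\min,\perp}(L)\ge(1-o(1))rp$. Plugging into the first paragraph yields $\lambda_{\min,\perp}(\nabla^2\ell(\omega^*))\ge \tfrac{1}{4\kappa}rp$, where the $1-o(1)$ factor is absorbed by the slack in \eqref{ineq-mu-bn-cn} (the factor $(1+e^{-|\cdot|})^{-2}$ was crudely replaced by $\tfrac14$). A union bound over the degree event, the adjacency-norm event, and the event of Lemma \ref{le:A0} keeps the total failure probability at $O(r^{-10})$.

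The main obstacle is the spectral-norm estimate $\|A-\mathbb{E}A\|=O(\sqrt{tp})$ at the Erd\H{o}s--R\'enyi boundary $p\asymp\log r/r$. At this density the naive bound can be inflated by a handful of atypically high-degree nodes, so the clean $O(\sqrt{tp})$ control requires a careful argument (e.g.\ a Feige--Ofek / Bandeira--van Handel type regularization that truncates high-degree vertices, valid precisely because $c_0$ is taken large enough to sit safely above the connectivity threshold). An alternative that avoids sharp random-matrix inputs is to establish the uniform lower bound $\sum_{(i,j+r)\in\mathcal{E}}(z_i-z_{j+r})^2\ge(1-o(1))rp$ directly over an $\varepsilon$-net of the unit sphere in $\mathbf{1}^\perp$, controlling each fixed $z$ by Bernstein and defeating the $e^{O(r+t)}$ covering factor with the large-deviation rate when $c_0$ is large; this route is more self-contained but quantitatively more delicate.
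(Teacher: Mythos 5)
Your main route is sound and reaches the stated bound by the standard path for lemmas of this type: evaluate \eqref{eq:Hessian} at $\omega^*$, use the pointwise lower bound $\mu'(\omega_i^*-\omega_{j+r}^*)\ge \tfrac{1}{4\kappa}$ from \eqref{ineq-mu-bn-cn} to dominate the Hessian from below by $\tfrac{1}{4\kappa}L$ with $L$ the unweighted Laplacian of the sampled bipartite graph, read off $\lambda_{\min,\perp}(\mathbb{E}L)=p\min\{r,t\}=rp$ from the spectrum of $K_{r,t}$, and show the fluctuation is a vanishing fraction of $rp$. Where you diverge is in controlling the fluctuation: you split $L-\mathbb{E}L$ into degree and adjacency parts and invoke the sharp Feige--Ofek/Bandeira--van Handel bound $\|A-\mathbb{E}A\|=O(\sqrt{tp})$, which you rightly flag as the delicate point at the connectivity threshold. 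That is more machinery than the problem needs. Since $L-\mathbb{E}L=\sum_{i,j}(X_{i,j+r}-p)(e_i-e_{j+r})(e_i-e_{j+r})^\top$ is a sum of independent self-adjoint matrices, each of operator norm at most $2$, with matrix variance $\lesssim tp$, the matrix Bernstein inequality gives $\|L-\mathbb{E}L\|\lesssim\sqrt{tp\log r}+\log r$ directly, and this logarithmically weaker bound already suffices because $\sqrt{tp\log r}\lesssim rp/\sqrt{c_0}$ under Conditions \ref{condi-t-r} and \ref{condi-pr}, so the loss is absorbed by taking $c_0$ large. This one-step concentration argument (the device used in \cite{chen2019spectral}, whose leave-one-out framework the paper follows) needs no regularization of high-degree vertices and delivers the $1-O(r^{-10})$ probability without importing a sharp random-matrix theorem; your extra effort buys nothing here since only $\|L-\mathbb{E}L\|=o(rp)$ is required, not $O(\sqrt{tp})$.

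Two caveats. First, your fallback alternative (an $\varepsilon$-net over the unit sphere of $\mathbf{1}^\perp$ with scalar Bernstein per direction) does not work at this sparsity and should be dropped: for a fixed unit vector the deviation probability is only $\exp(-c\varepsilon^2 rp)=r^{-c\varepsilon^2 c_0}$, which is polynomially small and cannot defeat the $e^{\Theta(r+t)}$ cardinality of the net, no matter how large the constant $c_0$ is. Second, the constant: your argument yields $\lambda_{\min,\perp}(L)\ge(1-o(1))rp$ and hence $\tfrac{1-o(1)}{4\kappa}rp$, not $\tfrac{1}{4\kappa}rp$ exactly; the slack you invoke in $(1+e^{-|x|})^{-2}\ge\tfrac14$ disappears when the true parameters are all equal, so the clean conclusion should either carry the $(1-o(1))$ factor or state the bound with a slightly smaller constant such as $\tfrac{1}{8\kappa}$. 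This is cosmetic for every downstream use of the lemma, which only requires $\lambda_{\min,\perp}(\nabla^2\ell(\omega^*))\gtrsim rp/\kappa$, but it is worth making explicit.
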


\begin{lemma}\label{le:ball12}
With a probability of at least $1-O(r^{-10})$, we obtain:
\begin{align}
\label{le:grad-L2}
& \|  \nabla \ell ({\omega ^*}) \|_2^2 \le C rtp\log r,
\\
\label{le:ball12-1}
& \mathop {\max }\limits_{i \in [r]} {\Big[ {\sum\limits_{(i,j) \in {\mathcal E},j\in [t] } {(-{a_{i,j+r}} + \mu (\omega_i^*-\omega_{j+r}^*))} } \Big]^2}
\le C tp\log r,
\\
\label{le:ball12-2}
& \mathop {\max }\limits_{j\in [t]} {\Big[ {\sum\limits_{(i,j) \in {\mathcal E},i \in [r] } {(-{a_{i,j+r}} + \mu (\omega_i^*-\omega_{j+r}^*))} } \Big]^2} \le C rp\log r,
\\
\label{le:ball12-3}
& \mathop {\max }\limits_{i \in [r]} \sum\limits_{(i,j) \in {\mathcal E},j\in [t]} {{{( - {a_{i,j+r}} + \mu (\omega _i^* - \omega_{j+r}^*))}^2}} \le C tp\log r,
\\
\label{le:ball12-4}
&\mathop {\max }\limits_{j\in [t]} \sum\limits_{(i,j) \in {\mathcal E},i \in [r]} {{{(-{a_{i,j+r}} + \mu (\omega _i^* - \omega_{j+r}^*))}^2}}   \le C rp\log r,
\end{align}
where $C$ is  a sufficiently large constant. 
\end{lemma}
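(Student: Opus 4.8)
The plan is to treat all five bounds through the common observation that, for each individual--item pair $(i,j)$ with $i\in[r]$ and $j\in[t]$, the centered residual $\epsilon_{ij} := a_{i,j+r} - \mu(\omega_i^* - \omega_{j+r}^*)$ satisfies $\EE[\epsilon_{ij}\mid X_{i,j+r}=1]=0$, $|\epsilon_{ij}|\le 1$, and $\Var(\epsilon_{ij}\mid X_{i,j+r}=1)=\mu(\omega_i^*-\omega_{j+r}^*)(1-\mu(\omega_i^*-\omega_{j+r}^*))\le 1/4$. Since the Erd\H{o}s--R\'enyi sampling variables $X_{i,j+r}\sim\mathrm{Bernoulli}(p)$ are independent of the responses and both families are independent across pairs, the products $Z_{ij}:=X_{i,j+r}\,\epsilon_{ij}$ are mutually independent, bounded by $|Z_{ij}|\le 1$, mean zero (because $\EE[Z_{ij}]=p\cdot 0=0$), and of variance $\EE[Z_{ij}^2]=p\,\mu(1-\mu)\le p/4$. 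This reduces every estimate to a concentration statement for partial sums of the $Z_{ij}$.

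Bounds \eqref{le:ball12-3} and \eqref{le:ball12-4} are the easiest and need no concentration: since $\epsilon_{ij}^2\le 1$, for each $i\in[r]$ one has $\sum_{(i,j)\in{\mathcal E},\,j\in[t]}\epsilon_{ij}^2\le \sum_{j\in[t]}X_{i,j+r}=d_i$, and symmetrically the column sum is at most $d_{j+r}$. Invoking the degree control of Lemma \ref{le:A0}, on the event $\mathcal{A}_0$ one has $d_i\le d_{\max}\le \tfrac32 tp$ and $d_{j+r}\le d_{\max}\le\tfrac32 tp\le \tfrac32 c_1 rp$ by Condition \ref{condi-t-r}; these are bounded by $C\,tp\log r$ and $C\,rp\log r$ respectively, giving \eqref{le:ball12-3}--\eqref{le:ball12-4} with probability $1-O(r^{-10})$.

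For \eqref{le:ball12-1} I would fix $i\in[r]$ and apply Bernstein's inequality to $S_i:=\sum_{j\in[t]}Z_{ij}$, whose summands have total variance at most $tp/4$ and are bounded by $1$. Choosing the deviation level $x=C'\sqrt{tp\log r}$ and using $tp\ge rp\ge c_0\log r$ from Condition \ref{condi-pr} (with $c_0$ large relative to $C'$) to verify that the variance term dominates the Bernstein denominator, I obtain $\PP(|S_i|\ge C'\sqrt{tp\log r})\le 2\exp(-c\,C'^2\log r)$; taking $C'$ large and union bounding over $i\in[r]$ yields $\max_{i\in[r]}S_i^2\le C\,tp\log r$ with probability $1-O(r^{-10})$. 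The symmetric argument applied to the column sums $T_j:=\sum_{i\in[r]}Z_{ij}$, now with total variance at most $rp/4$ and a union bound over $j\in[t]$, gives \eqref{le:ball12-2}. Finally, \eqref{le:grad-L2} follows by summing: the components of $\nabla\ell(\omega^*)$ in \eqref{eq:gradient} are exactly $\pm S_i$ for $i\in[r]$ and $\pm T_j$ for $j\in[t]$, so $\|\nabla\ell(\omega^*)\|_2^2=\sum_{i\in[r]}S_i^2+\sum_{j\in[t]}T_j^2\le r\max_{i}S_i^2+t\max_{j}T_j^2\le C\,rtp\log r$ by \eqref{le:ball12-1}--\eqref{le:ball12-2}.

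The main obstacle is the Bernstein step for \eqref{le:ball12-1}--\eqref{le:ball12-2}: I must track the constant $C'$ carefully so that after the union bound over $r$ (resp. $t\asymp r$) nodes the failure probability is still $O(r^{-10})$, which forces $c\,C'^2\ge 11$; I also need the sparsity floor $p\gtrsim\log r/r$ precisely to guarantee that $\sqrt{tp\log r}$ sits in the sub-Gaussian (variance-dominated) regime of Bernstein rather than the sub-exponential tail, so that $tp\log r$ is indeed the correct order at the chosen threshold. Everything else is bookkeeping, and \eqref{le:grad-L2}, \eqref{le:ball12-3}, \eqref{le:ball12-4} are essentially corollaries of the two maximal inequalities and Lemma \ref{le:A0}.
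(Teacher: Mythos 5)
Your proposal is correct and follows the standard route one would expect for this lemma (and that the paper's supplementary proof takes): reduce everything to the independent, mean-zero, bounded summands $Z_{ij}=X_{i,j+r}\bigl(a_{i,j+r}-\mu(\omega_i^*-\omega_{j+r}^*)\bigr)$ with variance at most $p/4$, apply Bernstein plus a union bound over the $r$ rows and $t\asymp r$ columns for \eqref{le:ball12-1}--\eqref{le:ball12-2}, observe that \eqref{le:ball12-3}--\eqref{le:ball12-4} follow deterministically from $\epsilon_{ij}^2\le 1$ together with the degree control of Lemma \ref{le:A0} and Condition \ref{condi-t-r}, and obtain \eqref{le:grad-L2} by summing the squared gradient coordinates, which are exactly $\pm S_i$ and $\pm T_j$. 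The only cosmetic imprecision is the remark that $c_0$ must be large relative to $C'$: this is not needed, since even in the sub-exponential regime of Bernstein the exponent at threshold $C'\sqrt{tp\log r}$ is at least of order $C'\sqrt{c_0}\,\log r$, so taking $C'$ (or $c_0$) sufficiently large already delivers the $O(r^{-11})$ per-index tail required for the union bound.
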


\begin{lemma}\label{le:ball-p}
Under Conditions \ref{condi-t-r} and \ref{condi-pr},  with a probability of at least $1 - O({r^{ - 10}})$, it holds that:
\begin{small}
\begin{eqnarray}
\label{eq:ball-p1}
\max_{i\in[r]}\sum_{j\in[t]}(X_{i,j+r}-p)^2 & \leq & C^\prime tp,
\\
\label{eq:ball-p2}
\max_{j\in[t]}\sum_{i\in[r]}(X_{i,j+r}-p)^2 & \leq  & C^\prime rp,
\end{eqnarray}
\end{small}
where $C^\prime$ is a sufficiently large constant.
\end{lemma}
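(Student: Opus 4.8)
The plan is to reduce both displays to the degree estimates already established in Lemma \ref{le:A0}, by exploiting a deterministic algebraic identity valid for any Bernoulli variable. The key observation is that for $X_{i,j+r}\in\{0,1\}$,
\begin{equation*}
(X_{i,j+r}-p)^2 = (1-2p)X_{i,j+r} + p^2,
\end{equation*}
as one verifies by checking $X_{i,j+r}=0$ and $X_{i,j+r}=1$ separately. Summing over $j\in[t]$ and recalling that $d_i=\sum_{j\in[t]}X_{i,j+r}$ is exactly the degree of individual node $i$ (individuals connect only to items), we obtain
\begin{equation*}
\sum_{j\in[t]}(X_{i,j+r}-p)^2 = (1-2p)d_i + tp^2,
\end{equation*}
and, symmetrically, $\sum_{i\in[r]}(X_{i,j+r}-p)^2 = (1-2p)d_{j+r}+rp^2$, where $d_{j+r}$ is the degree of item node $j+r$. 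In particular, since $d_i\ge 0$ we always have $(1-2p)d_i\le d_i$ regardless of the sign of $1-2p$.

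Next I would invoke Lemma \ref{le:A0}, which on the event $\mathcal{A}_0$ of probability at least $1-O(r^{-10})$ gives $d_{\max}\le \tfrac{3}{2}tp$. On $\mathcal{A}_0$, for every $i\in[r]$, using $(1-2p)d_i\le d_i$ and $tp^2\le tp$ (as $p\le 1$),
\begin{equation*}
\sum_{j\in[t]}(X_{i,j+r}-p)^2 \le d_i + tp^2 \le \tfrac{3}{2}tp + tp = \tfrac{5}{2}tp,
\end{equation*}
which is \eqref{eq:ball-p1} with $C'=5/2$. For the item-side bound \eqref{eq:ball-p2}, the same event yields $d_{j+r}\le \tfrac{3}{2}tp$; here the only extra step is to convert $tp$ into $rp$ using Condition \ref{condi-t-r} (so $t\le c_1 r$, hence $tp\le c_1 rp$), together with $rp^2\le rp$, giving
\begin{equation*}
\sum_{i\in[r]}(X_{i,j+r}-p)^2 \le \tfrac{3}{2}tp + rp^2 \le \Big(\tfrac{3c_1}{2}+1\Big)rp,
\end{equation*}
so $C'=\tfrac{3c_1}{2}+1$ suffices. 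Both bounds hold simultaneously on $\mathcal{A}_0$, so the claimed probability $1-O(r^{-10})$ is inherited directly from Lemma \ref{le:A0}.

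There is no serious obstacle once the identity is spotted: the whole statement is a corollary of the degree concentration in Lemma \ref{le:A0}, and the only point needing care is the $tp\mapsto rp$ conversion, which is precisely where Condition \ref{condi-t-r} enters. If instead one wanted a self-contained argument not relying on Lemma \ref{le:A0}, I would apply a Bernstein-type inequality to the independent, mean-zero, bounded terms $(X_{i,j+r}-p)^2-p(1-p)$ for fixed $i$ and then union bound over $i\in[r]$ (and over $j\in[t]$, which costs the same order since $t\asymp r$). The essential feature is that each term has \emph{variance of order} $p$ rather than order $1$, so the sum concentrates at scale $tp$; a Hoeffding bound using only boundedness would control deviations merely at scale $\sqrt{t\log r}$, which overwhelms the target $tp\asymp\log r$ in the sparse regime $p\asymp\log r/r$ (since $\sqrt{t\log r}\asymp\sqrt{r\log r}\gg\log r$). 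The variance-sensitive Bernstein estimate, combined with $tp\gtrsim\log r$ from Condition \ref{condi-pr}, delivers a per-index failure probability $O(r^{-11})$ and hence $O(r^{-10})$ after the union. I would nonetheless present the identity-based route, as it is shorter and reuses an already-proven lemma.
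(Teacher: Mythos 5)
Your proof is correct. The identity $(X_{i,j+r}-p)^2=(1-2p)X_{i,j+r}+p^2$ is verified immediately from $X_{i,j+r}^2=X_{i,j+r}$, and it turns each row (or column) sum of squared deviations into an affine function of the corresponding node degree, namely $(1-2p)d_i+tp^2$ and $(1-2p)d_{j+r}+rp^2$. Since $(1-2p)d_i\le d_i$ holds for either sign of $1-2p$, the two displays become deterministic consequences of the event $\mathcal{A}_0$ of Lemma \ref{le:A0}, with the constants $5/2$ and $3c_1/2+1$ you compute, and the failure probability $O(r^{-10})$ is inherited verbatim. This differs from the route one would expect for such a lemma (and the one you sketch as a fallback): a direct Bernstein-type concentration bound applied to the centred summands $(X_{i,j+r}-p)^2-p(1-p)$, followed by a union bound over $i\in[r]$ and $j\in[t]$, using $tp\gtrsim\log r$ from Condition \ref{condi-pr} to make the per-index failure probability $O(r^{-11})$. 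Your reduction buys brevity and removes any fresh probabilistic estimate, at the cost of coupling the lemma to the specific constants in \eqref{eq:A0}; the direct Bernstein argument is self-contained and would survive if the degree bounds were stated with different constants or on a different event. Your remark that a variance-insensitive Hoeffding bound would only control deviations at scale $\sqrt{t\log r}$, which dominates $tp\asymp\log r$ in the sparsest regime, correctly identifies why the naive concentration route fails and why either the identity or a Bernstein inequality is genuinely needed. Either version of your argument is acceptable; the identity-based one is preferable for its economy.
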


\subsection{The existence of the MLE}
\label{subsec-exis-MLE}

Here, we aim to show that the MLE $\hat\omega$ exists and is bounded by a high probability.
We define the regularised MLE ${\hat \omega _\lambda }$ as follows:
\begin{equation}\label{eq:argmin_omega}
{\hat \omega _\lambda }: = {\mathop {\arg \min }\limits_{\omega  \in {^{r + t}}} {\mkern 1mu} \Big\{ \ell (\omega ) + \frac{1}{2}\lambda {{\left\| \omega  \right\|}_2}} \Big\},
\end{equation}
where $\lambda>0$ is a tuning parameter.
We use it as a middleman to demonstrate the existence of the MLE.
To analyse the properties of $\hat{\omega}_\lambda$, we use the following gradient descent sequence:
\begin{equation}
\label{eq:gradient-update}
\omega_{\lambda}^{(k+1)}={\omega}_{\lambda}^{(k)} - \eta({\nabla \ell ({\omega_{\lambda}^{(k)}})} + \lambda{\omega}_{\lambda}^{(k)}),~~k=0,1,\ldots
\end{equation}
where the initial point is set to $\omega^*$ and we adopt the time-invariant step rule as follows:
\[
\eta=\frac{1}{\lambda+tp},
\]
where $\lambda = 1/(r + t)$.
Our proof strategy is motivated by \cite{chen2019spectral} and \cite{chen2022partial},
who used the leave-one-out method to prove the consistency of MLE in the Bradley--Terry model.
Here, we briefly describe this concept.
Note that, if $\hat{\omega}$ exists and is bounded, then:
\[
\hat{\omega} - \omega^* = (\hat{\omega}-\hat{\omega}_{\lambda}) + (\hat{\omega}_\lambda - \omega_\lambda^{(k)}) +
(\omega_\lambda^{(k)} - \omega^*).
\]
Thus, the proof proceeds in three steps. First, we use the leave-one-out argument to demonstrate that output $\omega_\lambda^{(t)}$
is close to truth $\omega^{*}$ in an entrywise:
\begin{equation}
\label{lemma-omega-lambda}
\| \omega_\lambda^{(k)}-\omega^{*} \|_{\infty}  \leq 2.
\end{equation}
Second, we use standard optimisation theory to show that the output $\omega^{(t)}$  is close to the regularised MLE $\hat{\omega}_\lambda$:
\begin{eqnarray}
\label{le:stronglyconvex}
\| {\omega}^{(k)}_\lambda-{\hat\omega_\lambda} \|_{2} & \leq & \left(1-\frac{\lambda}{\lambda+tp}\right)^{T}\left\Vert {\hat\omega_\lambda}-{\omega^*}\right\Vert _{2},
\\
\label{le:entrywise-hat-MLE}
{\left\| {{{\hat \omega }_\lambda } - {\omega ^*}} \right\|_\infty } & \leq & 4.
\end{eqnarray}
The proofs of \eqref{lemma-omega-lambda}, \eqref{le:stronglyconvex}, and \eqref{le:entrywise-hat-MLE} are provided in the Supplementary Material.
Third, we show that $\hat\omega_\lambda$ is the interior of a compact set by bridging the regularised MLE and restricted MLE.
We can now formally state the following result.

\begin{lemma}
\label{lemma-existence}
Under Conditions \ref{condi-t-r} and \ref{condi-pr}, if $\kappa=O(1)$, then $\hat{\omega}$ exists and satisfies:
\begin{align}
\label{eq-lemma-ext}
\left\Vert \hat\omega-\omega^{*}\right\Vert _{\infty}\leq 5,
\end{align}
with the probability of at least $1-O(t^3r^{-10})$.
\end{lemma}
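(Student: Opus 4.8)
The plan is to treat the unregularised MLE as a companion of the regularised MLE $\hat\omega_\lambda$, whose existence, uniqueness and entrywise control $\|\hat\omega_\lambda-\omega^*\|_\infty\le 4$ are already in hand from \eqref{le:entrywise-hat-MLE}. First I would record that $\hat\omega_\lambda$ lies on the centred subspace $\mathcal{H}:=\{\omega\in\mathbb{R}^{r+t}:\mathbf{1}^\top\omega=0\}$: taking $\mathbf{1}^\top$ of the stationarity condition $\nabla\ell(\hat\omega_\lambda)+\lambda\hat\omega_\lambda=0$ and using the identity $\mathbf{1}^\top\nabla\ell(\omega)\equiv0$ (immediate from the two blocks of \eqref{eq:gradient}, whose individual- and item-sums cancel) forces $\mathbf{1}^\top\hat\omega_\lambda=0$. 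Then I would introduce the restricted MLE $\tilde\omega:=\argmin_{\omega\in\mathcal{B}}\ell(\omega)$ over the compact convex set $\mathcal{B}:=\{\omega\in\mathcal{H}:\|\omega-\omega^*\|_\infty\le 5\}$, which exists by continuity of $\ell$ and compactness of $\mathcal{B}$. The whole game is then to show that $\tilde\omega$ lies in the relative interior of $\mathcal{B}$, i.e. $\|\tilde\omega-\omega^*\|_\infty<5$.

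The key step is to compare, at every boundary point $\omega^b\in\mathcal{B}$ with $\|\omega^b-\omega^*\|_\infty=5$, the values $\ell(\omega^b)$ and $\ell(\hat\omega_\lambda)$. I would work with the regularised objective $f_\lambda(\omega):=\ell(\omega)+\tfrac12\lambda\|\omega\|_2^2$, for which $\hat\omega_\lambda$ is the global minimiser, so $\nabla f_\lambda(\hat\omega_\lambda)=0$. Because $\mu'$ admits a uniform lower bound on $\mathcal{B}$ by \eqref{ineq-mu-bn-cn} (with $C=5$), the restricted-curvature estimate of Lemma \ref{le:second-min-eigenvalue} extends from $\omega^*$ to all of $\mathcal{B}$: there is $m\gtrsim rp/\kappa$ with $z^\top\nabla^2 f_\lambda(\omega)z\ge m\|z\|_2^2$ for every $\omega\in\mathcal{B}$ and every $z\in\mathcal{H}$. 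Since both $\hat\omega_\lambda$ and $\omega^b$ lie in $\mathcal{B}\subset\mathcal{H}$, their difference lies in $\mathcal{H}$ and the segment joining them stays in $\mathcal{B}$; integrating the curvature bound along this segment (Taylor with integral remainder) gives $f_\lambda(\omega^b)\ge f_\lambda(\hat\omega_\lambda)+\tfrac{m}{2}\|\omega^b-\hat\omega_\lambda\|_2^2$. On the boundary, the coordinate attaining $\|\omega^b-\omega^*\|_\infty=5$ is at $\ell_\infty$-distance at least $5-4=1$ from $\hat\omega_\lambda$, so $\|\omega^b-\hat\omega_\lambda\|_2^2\ge 1$ and hence $f_\lambda(\omega^b)-f_\lambda(\hat\omega_\lambda)\ge m/2\gtrsim rp$. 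Converting back to $\ell$ via $|f_\lambda-\ell|=\tfrac12\lambda\|\omega\|_2^2=O(1)$ uniformly over $\mathcal{B}$ (valid since $\lambda=1/(r+t)$ and $\|\omega\|_\infty=O(1)$ under $\kappa=O(1)$ force $\lambda\|\omega\|_2^2=O(1)$), I obtain $\ell(\omega^b)-\ell(\hat\omega_\lambda)\ge m/2-O(1)>0$ once $rp\gtrsim\log r$ is large. Thus $\ell(\omega^b)>\ell(\hat\omega_\lambda)\ge\min_{\mathcal{B}}\ell$ at every boundary point, so the minimiser $\tilde\omega$ cannot sit on $\partial\mathcal{B}$.

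Finally, since $\tilde\omega$ is an interior minimiser of $\ell$ over the linear space $\mathcal{H}$, the projection of $\nabla\ell(\tilde\omega)$ onto $\mathcal{H}$ vanishes, so $\nabla\ell(\tilde\omega)\in\mathrm{span}(\mathbf{1})$; combined again with $\mathbf{1}^\top\nabla\ell\equiv0$ this forces $\nabla\ell(\tilde\omega)=0$. As $\ell$ is convex on $\mathbb{R}^{r+t}$, a stationary point is a global minimiser, so $\hat\omega:=\tilde\omega$ is the (centred) MLE and $\|\hat\omega-\omega^*\|_\infty\le 5$. For the probability, I would intersect the high-probability events of Lemmas \ref{le:A0}--\ref{le:ball-p}, the curvature bound of Lemma \ref{le:second-min-eigenvalue} (extended to $\mathcal{B}$), and the entrywise control \eqref{le:entrywise-hat-MLE}; the dominant contribution is the $O(t^3r^{-10})$ failure probability incurred by the leave-one-out union bound behind \eqref{lemma-omega-lambda}--\eqref{le:entrywise-hat-MLE}, giving the claimed $1-O(t^3r^{-10})$. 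I expect the main obstacle to be the uniform (over all of $\mathcal{B}$, not merely at $\omega^*$) curvature lower bound: one must ensure that the spectral-gap estimate of Lemma \ref{le:second-min-eigenvalue}, stated at the truth, survives the perturbation to the whole radius-$5$ ball, which is precisely what the uniform $\mu'$ bound \eqref{ineq-mu-bn-cn} is there to guarantee.
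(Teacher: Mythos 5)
Your proposal is correct, and it follows the same overall skeleton as the paper's proof (regularised MLE $\hat\omega_\lambda$ with $\|\hat\omega_\lambda-\omega^*\|_\infty\le 4$ as the anchor, a restricted MLE over the radius-$5$ ball, restricted strong convexity from Lemma \ref{le:second-min-eigenvalue} extended to the ball via \eqref{ineq-mu-bn-cn}, and the interior-minimiser-implies-stationary-implies-global conclusion). Where you genuinely diverge is in the bridging step. The paper Taylor-expands $\ell$ at $\hat\omega_\lambda$, applies Cauchy--Schwarz, and exploits the stationarity identity $\nabla\ell(\hat\omega_\lambda)=-\lambda\hat\omega_\lambda$ to show that the restricted minimiser $\widetilde\omega$ is within $\ell_2$-distance $O\bigl(\kappa\|\omega^*\|_\infty/(rp\,(r+t)^{1/2})\bigr)=o(1)$ of $\hat\omega_\lambda$, and then concludes $\|\widetilde\omega-\omega^*\|_\infty\le 4+o(1)<5$ by the triangle inequality. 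You instead run a contrapositive energy argument: strong convexity of $f_\lambda$ about its unconstrained minimiser forces $f_\lambda(\omega^b)-f_\lambda(\hat\omega_\lambda)\gtrsim rp/\kappa$ at every boundary point (since $\|\omega^b-\hat\omega_\lambda\|_2\ge\|\omega^b-\hat\omega_\lambda\|_\infty\ge 1$), and the uniform $O(1)$ gap $|f_\lambda-\ell|=\tfrac12\lambda\|\omega\|_2^2$ on the ball transfers this to $\ell$, excluding the boundary outright. Your route buys a cleaner handling of the regularisation term (an additive $O(1)$ comparison rather than the paper's chain of bounds on $\lambda\|\hat\omega_\lambda\|_2$, which as printed contains a redundant step) at the cost of needing the curvature bound along the whole segment from $\hat\omega_\lambda$ to an arbitrary boundary point rather than only at one intermediate point; both requirements are covered by the same uniform lower bound on $\mu'$ over the ball, so nothing is lost. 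Your identification of the failure probability with the leave-one-out union bound behind \eqref{le:entrywise-hat-MLE} also matches the source of the $O(t^3r^{-10})$ term in the statement.
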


\begin{proof}[Proof of Lemma \ref{lemma-existence}]
We define a restricted MLE as:
\begin{align}
\label{eq:con-MLE}
\widetilde {\omega}:=\mathop {\arg \min }\limits_{\mathbbm{1}_{r + t}^ \top \omega  = 0:{{\left\| {\omega  - {\omega ^*}} \right\|}_\infty } \le 5} \ell (\omega ).
\end{align}
From \eqref{le:entrywise-hat-MLE}, $\hat {\omega}_{\lambda}$ satisfies the constraint in \eqref{eq:con-MLE}.
Therefore, we have:
\begin{align*}
\ell(\hat {\omega}_{\lambda}) \geq \ell(\widetilde {\omega}).
\end{align*}
Applying a second-order Taylor expansion yields:
\begin{align*}
\ell ({\widetilde\omega }) = \ell ({\hat \omega _\lambda }) + {({\widetilde \omega } - {\hat \omega _\lambda })^ \top }\nabla \ell ({\hat \omega _\lambda }) + \frac{1}{2}{({\widetilde\omega } - {\hat \omega _\lambda })^ \top }{\nabla ^2}\ell (\tau )({\widetilde \omega } - {\hat \omega _\lambda }),
\end{align*}
where $\tau$ is a convex combination of $\widetilde\omega $ and $\hat \omega _\lambda$.
From \eqref{le:entrywise-hat-MLE}, $\|\hat {\omega}_{\lambda}-\omega^*\|_{\infty}\leq 4$.
Owing to constraint $\|\widetilde{\omega}-\omega^*\|_{\infty}\leq 5$, we have $\|\tau-\omega^*\|_{\infty}\leq 5$.
As $\ell ({\widetilde\omega })\le \ell ({\hat \omega _\lambda })$, we also have:
\begin{align*}
\frac{1}{2}{({{\widetilde\omega }} - {{\hat \omega }_\lambda })^ \top }{\nabla ^2}\ell (\tau )({{\widetilde \omega }} - {{\hat \omega }_\lambda })
& \le  - {({{\widetilde \omega }} - {{\hat \omega }_\lambda })^ \top }\nabla \ell ({{\hat \omega }_\lambda })
\end{align*}
Using the Cauchy--Schwarz inequality, we obtain:
\begin{align*}
- {({{\widetilde \omega }} - {{\hat \omega }_\lambda })^ \top }\nabla \ell ({{\hat \omega }_\lambda }) \le {\left\| {\nabla \ell ({{\hat \omega }_\lambda })} \right\|_2}{\left\| {{{\widetilde \omega }} - {{\hat \omega }_\lambda }} \right\|_2}.
\end{align*}
Combining these yields:
\[
\frac{1}{2}\lambda_{\min,\perp}( \nabla^2 \ell(\tau ) ) \| ({{\widetilde \omega }} - {{\hat \omega }_\lambda }) \|_2^2
\le {\left\| {\nabla \ell ({{\hat \omega }_\lambda })} \right\|_2}{\left\| {{{\widetilde \omega }} - {{\hat \omega }_\lambda }} \right\|_2}.
\]
From Lemma \ref{le:second-min-eigenvalue}, it follows that:
\begin{align*}
{\left\| {{{\widetilde \omega }} - {{\hat \omega }_\lambda }} \right\|_2} \lesssim \frac{{{{\left\| {\nabla \ell_\lambda ({{\hat \omega } })} \right\|}_2}}}{ rp/(4\kappa) }.
\end{align*}
Given that $\nabla\ell(\hat {\omega}_{\lambda})+\lambda\hat {\omega}_{\lambda}=0$, we have:
\begin{align*}
\left\| {{{\widetilde \omega }} - {{\hat \omega }_\lambda }} \right\|_2
\lesssim \frac{ \lambda \| {{{\hat \omega }_\lambda }}\|_2 }{ rp/(4\kappa) }
\lesssim \frac{ \lambda (r+t)^{1/2} \| {{{\hat \omega }_\lambda }}\|_2 }{ rp/(4\kappa) }
\lesssim \frac{  \kappa \|\omega^*\|_\infty }{ rp (r+t)^{1/2} },
\end{align*}
where we set $\lambda = (r+t)^{-1}$.
Therefore, for a sufficiently large $r$, if $\kappa=O(1)$ and $ \kappa \|\omega^*\|_\infty/\{ (rp) (r+t)^{1/2}\} \to 0$,
then:
\begin{align*}
\|\widetilde {\omega}-\omega^*\|_{\infty} \leq \|\hat {\omega}_{\lambda}-\omega^*\|_{\infty}+\|\widetilde {\omega}-\hat {\omega}_{\lambda}\|_2
\leq 4 + \|\widetilde {\omega}-\hat {\omega}_{\lambda}\|_2 < 5.
\end{align*}
The minimizer in \eqref{eq:con-MLE} is in the interior of the constraint. From the convexity of \eqref{eq:con-MLE}, we have $\widetilde {\omega}=\hat {\omega}$ such that \eqref{eq-lemma-ext} holds.
This completes the proof.
\end{proof}

\subsection{Proof of Theorem \ref{th:MLE-main}}
\label{sub-consistency-proof}

Theorem \ref{th:MLE-main} contains two claims, \eqref{eq:theorem-con-a} and \eqref{eq:theorem-con-b}:
The proofs are presented in Sections \ref{subsub-th1-a} and \ref{subsub-th1-b}, respectively.

\subsubsection{Proof of \eqref{eq:theorem-con-a}}
\label{subsub-th1-a}

We aim to demonstrate that the MLE $\hat{\omega}$ is close to the true $\omega^*$ in terms of the $\ell_2$-norm distance.
From the definition of $\hat{\omega}$ in \eqref{eq:hat-omega}, we have $\ell(\omega^*)\geq\ell(\hat{\omega})$.
We apply a second-order Taylor expansion to $\ell(\hat{\omega})$ to obtain:
\begin{align*}
\ell(\hat{\omega})=\ell(\omega^*) + (\hat{\omega}-\omega^*)^T\nabla\ell(\omega^*) + \frac{1}{2}(\hat{\omega}-\omega^*)^\top \nabla^2\ell(\tilde \omega)(\hat{\omega}-\omega^*),
\end{align*}
where $\tilde \omega$ denotes a convex combination of $\hat\omega$ and $\omega^*$.
From Lemma \ref{lemma-existence}, $\|\hat{\omega}-\omega^*\|_{\infty}\leq 5$, which implies $\|\tilde \omega-\omega^*\|_{\infty}\leq 5$.
Thus, from Lemma \ref{le:second-min-eigenvalue}, we have:
\begin{align*}
\frac{1}{2}(\hat{\omega}-\omega^*)^\top \nabla^2\ell(\tilde \omega)(\hat{\omega}-\omega^*)\geq \frac{1}{8\kappa}rp\|\hat{\omega}-\omega^*\|_2^2.
\end{align*}
Therefore, together with $\ell(\omega^*)\geq\ell(\hat{\omega})$, we have:
\begin{align*}
\ell(\omega^*)\geq\ell(\hat{\omega})=\ell(\omega^*) + (\hat{\omega}-\omega^*)^\top \nabla\ell(\omega^*) + \frac{1}{8\kappa}rp\|\hat{\omega}-\omega^*\|_2^2.
\end{align*}
Using the Cauchy-Schwarz inequality, we obtain
\begin{align*}
 \frac{1}{8\kappa}rp\|\hat{\omega}-\omega^*\|_2^2 \le -(\hat{\omega}-\omega^*)^\top \nabla\ell(\omega^*)\le \|\nabla\ell(\omega^*)\|_2\|\hat{\omega}-\omega^*\|_2.
 \end{align*}
From Lemma \ref{le:ball12} it follows that:
\begin{align*}
\|\hat{\omega}-\omega^*\|_2^2\leq \frac{\|\nabla\ell(\omega^*)\|_2^2}{\left(\frac{1}{8\kappa}rp\right)^2} \lesssim \frac{\kappa^2 \log r}{p}.
\end{align*}
This completes the proof.

\subsubsection{Proof of \eqref{eq:theorem-con-b}}
\label{subsub-th1-b}

We aim to demonstrate that the MLE $\hat{\omega}$ is close to the truth $\omega^*$ in terms of the $\ell_\infty$-norm distance.
Let $\omega_m\in\mathbb{R}$ denote the $m$-th entry of $\omega$ and $\omega_{-m}=(\omega_{1},\ldots,\omega_{m-1},\omega_{m+1},\ldots,\omega_{r+t})\in\mathbb{R}^{r+t-1}$ be the remaining entries.
We divide the negative log-likelihood function $\ell(\omega)$ into two components:
\begin{equation}\label{eq:MLE-new}
\ell(\omega) = {\ell}^{(-m)}(\omega_{-m}) + {\ell}^{(m)}(\omega_m|\omega_{-m}),
\end{equation}
where
\begin{equation}
\label{define-ell-m-drop}
\ell^{(-m)}( \omega_{-m} )
= \begin{cases}
\sum\limits_{ \begin{smallmatrix}i \in [r]\backslash \{ m\} \\ j \in [t] \end{smallmatrix} } X_{i,j + r}
\Big\{  {{a_{i,j + r}}\log \frac{1}{{\mu ({\omega _i} - {\omega _{j + r}})}}} +
                       {(1 - {a_{i,j + r}})\log \frac{1}{{1 - \mu ({\omega _i} - {\omega _{j + r}})}}} \Big\}, & m \in [r],
\\
 \sum\limits_{ \begin{smallmatrix}i \in [r] \\ j \in [t]\backslash \{ m-r\} \end{smallmatrix} } X_{i,j + r}
\Big\{  {{a_{i,j + r}}\log \frac{1}{{\mu ({\omega _i} - {\omega _{j + r}})}}} +
                       {(1 - {a_{i,j + r}})\log \frac{1}{{1 - \mu ({\omega _i} - {\omega _{j + r}})}}} \Big\}, & m-r \in [t]
\end{cases}
\end{equation}
and
\begin{equation}
\ell^{(m)}( \omega_m | \omega_{-m} )
= \begin{cases}
 \sum\limits_{j \in [t]} {{X_{m,j + r}}}\Big\{  {{a_{m,j + r}}\log \frac{1}{{\mu ({\omega _m} - {\omega _{j + r}})}}} +
{(1 - {a_{m,j + r}})\log \frac{1}{{1 - \mu ({\omega _m} - {\omega _{j + r}})}}} \Big\}, & m \in [r],
\\
\sum\limits_{i \in [r]} {{X_{i,m}}} \Big\{ {{a_{i,m}}\log \frac{1}{{\mu ({\omega _i} - {\omega _m})}}} +
  {(1 - {a_{i,m}})\log \frac{1}{{1 - \mu ({\omega_i} - {\omega _m})}}} \Big\}, & m-r \in [t].
\end{cases}
\end{equation}
We define
\begin{align}
\label{eq:con-MLE-l-m}
{\hat{\omega}}_{-m}^{(m)} := \mathop {\arg \min }\limits_{{\omega_{-m}}:{{\left\| {{\omega _{-m}} - \omega_{-m}^*} \right\|}_\infty } \le 5} {\ell^{(-m)}}({\omega_{-m}}).
\end{align}
We denote ${\varphi ^{(m)}}({\omega _m}|{\omega _{ - m}})$ by the first derivative of  ${\ell ^{(m)}}({\omega _m}|{\omega _{ - m}})$  with respect to $\omega _m$:
\begin{align}
{\varphi ^{(m)}}({\omega _m}|{\omega _{ - m}})
= \begin{cases}
\sum\limits_{j \in [t]} {{X_{m,j + r}}\left( { - {a_{m,j + r}} + \mu ({\omega _m} - {\omega _{j + r}})} \right)}, &  m \in [r],
\\
\sum\limits_{i \in [r]} {{X_{i,m}}\left( { - {a_{i,m}} + \mu ({\omega _i} - {\omega _m})} \right)}, & m - r \in [t].
\end{cases}
\end{align}
Denote ${\psi ^{(m)}}({\omega _m}|{\omega _{ - m}})$ by the second derivative of  ${\ell ^{(m)}}({\omega _m}|{\omega _{ - m}})$  with respect to $\omega _m$:
\begin{align}
\label{def-psi-m}
\psi^{(m)}( \omega_m | \omega_{-m} )
= \begin{cases}
 \sum\limits_{j \in [t]} {{X_{m,j + r}}\mu '({\omega _m} - {\omega _{j + r}})},  &  m \in [r],
 \\
 \sum\limits_{i \in [r]} {{X_{i,m}}\mu '({\omega _i} - {\omega _m})},  & m - r \in [t].
\end{cases}
\end{align}

Now, we present a lemma that can be used in the proofs.

\begin{lemma}
\label{le:-m-no-ave-a}
Define $a_m:=\mathsf{ave}( \hat{\omega}_{-m}^{(m)}-\omega_{-m}^*)$.
Under Conditions \ref{condi-t-r} and \ref{condi-pr},
as $r\rightarrow\infty$, we have:
\begin{align}
\max_{m\in[r+t]}\|\hat{\omega}_{-m}^{(m)}-\omega_{-m}^*-a_m\mathbbm{1}_{r+t-1}\|_2^2\lesssim\frac{\log r}{p},
\end{align}
with probabilities of at least $1-O(r^{-10})$.
\end{lemma}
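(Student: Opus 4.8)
The plan is to replicate the $\ell_2$-consistency argument of Section \ref{subsub-th1-a} almost verbatim, but applied to the leave-one-out objective $\ell^{(-m)}$ instead of the full likelihood $\ell$, while carefully tracking the component of the error that is orthogonal to $\mathbbm{1}_{r+t-1}$. The payoff of this viewpoint is that every random quantity arising in the argument can be controlled by the \emph{full-graph} lemmas already in hand (Lemmas \ref{le:A0}, \ref{le:ball12} and \ref{le:second-min-eigenvalue}), so that the maximum over $m\in[r+t]$ costs nothing: all $r+t$ leave-one-out bounds ride on the same constant collection of high-probability events, rather than requiring a union bound.

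First I would note that $\omega_{-m}^*$ is feasible for the program \eqref{eq:con-MLE-l-m}, whence $\ell^{(-m)}(\hat\omega_{-m}^{(m)})\le \ell^{(-m)}(\omega_{-m}^*)$. Writing $\Delta^{(m)}:=\hat\omega_{-m}^{(m)}-\omega_{-m}^*$ and $\Delta_\perp^{(m)}:=\Delta^{(m)}-a_m\mathbbm{1}_{r+t-1}$ (the projection of $\Delta^{(m)}$ onto $\mathbbm{1}_{r+t-1}^\perp$, since $a_m=\mathsf{ave}(\Delta^{(m)})$), a second-order Taylor expansion of $\ell^{(-m)}$ about $\omega_{-m}^*$ gives
\begin{align*}
(\Delta^{(m)})^\top\nabla\ell^{(-m)}(\omega_{-m}^*)+\tfrac12(\Delta^{(m)})^\top\nabla^2\ell^{(-m)}(\widetilde\omega)\,\Delta^{(m)}\le 0,
\end{align*}
where $\widetilde\omega$ is a convex combination of $\hat\omega_{-m}^{(m)}$ and $\omega_{-m}^*$ and hence satisfies $\|\widetilde\omega-\omega_{-m}^*\|_\infty\le 5$. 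Because $\ell^{(-m)}$ depends on its arguments only through the differences $\omega_i-\omega_{j+r}$, it is invariant under adding a constant to all coordinates, so $\mathbbm{1}_{r+t-1}^\top\nabla\ell^{(-m)}=0$ and $\nabla^2\ell^{(-m)}\mathbbm{1}_{r+t-1}=0$. Substituting $\Delta^{(m)}=\Delta_\perp^{(m)}+a_m\mathbbm{1}_{r+t-1}$, both terms collapse onto $\Delta_\perp^{(m)}$, and a Cauchy--Schwarz step identical to Section \ref{subsub-th1-a} yields
\begin{align*}
\tfrac12\lambda_{\min,\perp}\big(\nabla^2\ell^{(-m)}(\widetilde\omega)\big)\,\|\Delta_\perp^{(m)}\|_2^2\le \|\nabla\ell^{(-m)}(\omega_{-m}^*)\|_2\,\|\Delta_\perp^{(m)}\|_2 .
\end{align*}

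It then remains to supply, uniformly in $m$, an upper bound on the gradient norm and a lower bound on the restricted eigenvalue. For the gradient, I would split $[\nabla\ell^{(-m)}(\omega_{-m}^*)]_i$ into the full-graph coordinate $[\nabla\ell(\omega^*)]_i$ minus the single edge term linking node $i$ to the deleted node $m$; the triangle inequality then gives $\|\nabla\ell^{(-m)}(\omega_{-m}^*)\|_2^2\lesssim \|\nabla\ell(\omega^*)\|_2^2+d_m\lesssim rtp\log r$ on the intersection of the events of Lemmas \ref{le:ball12} and \ref{le:A0}. For the curvature, I would embed any $z$ with $\mathbbm{1}_{r+t-1}^\top z=0$ into $\RR^{r+t}$ by inserting a zero in coordinate $m$, obtaining $\bar z$; then $z^\top\nabla^2\ell^{(-m)}(\widetilde\omega)z=\bar z^\top\nabla^2\ell(\widetilde\omega^{\mathrm{ext}})\bar z-\bar z^\top T_m\bar z$, where $\widetilde\omega^{\mathrm{ext}}$ extends $\widetilde\omega$ by $\omega_m^*$ in slot $m$ (still in the ball) and $T_m$ collects the edges incident to $m$. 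Since $\bar z_m=0$, the removed form reduces to a diagonal quadratic $\sum_k\mu'(\cdot)z_k^2$ in which each coordinate occurs at most once with weight $\mu'(\cdot)\le 1/4$, hence is at most $\tfrac14\|z\|_2^2$; invoking Lemma \ref{le:second-min-eigenvalue} for the full Hessian (valid throughout the ball $\|\cdot-\omega^*\|_\infty\le5$ via \eqref{ineq-mu-bn-cn}) therefore gives $\lambda_{\min,\perp}(\nabla^2\ell^{(-m)}(\widetilde\omega))\gtrsim rp/\kappa-\tfrac14\gtrsim rp/\kappa$. Combining the two bounds and using $t\asymp r$ from Condition \ref{condi-t-r} gives $\|\Delta_\perp^{(m)}\|_2\lesssim \kappa\sqrt{t\log r/(rp)}\asymp \sqrt{\log r/p}$ once $\kappa=O(1)$, and squaring yields the claim uniformly in $m$.

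The main obstacle is precisely this curvature transfer: deleting node $m$ strips away up to $\Theta(tp)$ rank-one terms, so a black-box perturbation bound would annihilate the entire $rp/\kappa$ lower bound. The observation that rescues the argument is that, after projecting onto coordinates other than $m$, the deleted terms degenerate into a \emph{diagonal} form of operator norm at most $1/4$, because in the bipartite graph node $m$ meets each other node at most once. With this in place, no fresh concentration inequality and no union bound over $m$ are required: the estimates for all $r+t$ leave-one-out problems hold simultaneously on the intersection of a fixed set of full-graph events, each of probability $1-O(r^{-10})$, which preserves the stated overall guarantee.
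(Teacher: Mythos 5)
Your proposal follows essentially the same route as the paper's proof: feasibility of $\omega_{-m}^*$ for \eqref{eq:con-MLE-l-m}, a second-order Taylor expansion collapsed onto the component orthogonal to $\mathbbm{1}_{r+t-1}$, the restricted strong convexity bound $\lambda_{\min,\perp}\gtrsim rp/\kappa$, Cauchy--Schwarz, and the gradient bound $\lesssim rtp\log r$, all riding on a fixed collection of high-probability events so that no union bound over $m$ is needed. The only difference is that you explicitly justify transferring Lemma \ref{le:second-min-eigenvalue} and \eqref{le:grad-L2} from the full objective to the leave-one-out objective (via the $d_m$ correction to the gradient and the observation that the deleted Hessian terms become a diagonal form of operator norm at most $1/4$ once coordinate $m$ is zeroed out) --- a step the paper applies without comment, so your version is a welcome tightening of the same argument rather than a different one.
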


\begin{proof}[Proof of Lemma \ref{le:-m-no-ave-a}]
When $c\in\mathbb{R}$, we have:
\begin{align*}
{\ell}^{(-m)}(\omega_{-m})&={\ell}^{(-m)}(\omega_{-m}+c\mathbbm{1}_{r+t-1}),\\
\nabla{\ell}^{(-m)}(\omega_{-m})&=\nabla{\ell}^{(-m)}(\omega_{-m}+c\mathbbm{1}_{r+t-1}), \\ \nabla^2{\ell}^{(-m)}(\omega_{-m})&=\nabla^2{\ell}^{(-m)}(\omega_{-m}+c\mathbbm{1}_{r+t-1}).
\end{align*}
According to the definition of ${\hat{\omega}}_{-m}^{(m)}$  in \eqref{eq:con-MLE-l-m} and using Taylor's expansion, we have:
\begin{equation*}
\begin{split}
{\ell}^{(-m)}(\omega_{-m}^*)& \geq  {\ell}^{(-m)}({\hat{\omega}}_{-m}^{(m)}) \\
&= {\ell}^{(-m)}(\omega_{-m}^*) + ({\hat{\omega}}_{-m}^{(m)}-\omega_{-m}^*-a_m\mathbbm{1}_{r+t-1})^\top \nabla{\ell}^{(-m)}(\omega_{-m}^*) \\
&\quad + \frac{1}{2}({\hat{\omega}}_{-m}^{(m)}-\omega_{-m}^*-a_m\mathbbm{1}_{r+t-1})^\top \nabla^2{\ell}^{(-m)}(\xi)({\hat{\omega}}_{-m}^{(m)}-\omega_{-m}^*-a_m\mathbbm{1}_{r+t-1}),
\end{split}
\end{equation*}
where $\xi=\omega_{-m}^*+\tau\left({\hat{\omega}}_{-m}^{(m)}-\omega_{-m}^*\right)$ and $0\le \tau \le 1$.
From the definition of ${\hat{\omega}}_{-m}^{(m)}$  in \eqref{eq:con-MLE-l-m}, we obtain:
$$\|\xi-\omega_{-m}^*\|_{\infty}\leq \|{\hat{\omega}}_{-m}^{(m)}-\omega_{-m}^*\|_{\infty}\leq 5.$$
By Lemma \ref{le:second-min-eigenvalue}, with probability $1-O ({r^{-10}})$, we have:
\begin{align*}
&\frac{1}{2}({\hat{\omega}}_{-m}^{(m)}-\omega_{-m}^*-a_m\mathbbm{1}_{r+t-1})^\top \nabla^2{\ell}^{(-m)}(\xi)({\hat{\omega}}_{-m}^{(m)}-\omega_{-m}^*-a_m\mathbbm{1}_{r+t-1})\\
 \geq & \frac{1}{4b_n}rp\|{\hat{\omega}}_{-m}^{(m)}-\omega_{-m}^*-a_m\mathbbm{1}_{r+t-1}\|_2^2.
\end{align*}
Using the Cauchy--Schwarz inequality, we obtain:
\begin{align*}
\|{\hat{\omega}}_{-m}^{(m)}-\omega_{-m}^*-a_m\mathbbm{1}_{r+t-1}\|_2\leq \frac{\|\nabla{\ell}^{(-m)}(\omega_{-m}^*)\|_2}{(\frac{1}{4b_n}rp)}.
\end{align*}
By combining Lemma \ref{le:grad-L2} with $1 - O({r^{ - 10}})$, we have:
$$\|{\hat{\omega}}_{-m}^{(m)}-\omega_{-m}^*-a_m\mathbbm{1}_{r+t-1}\|^2_2\lesssim \frac{\log r}{p}.$$
This completes the proof.
\end{proof}

Now, we can prove \eqref{eq:theorem-con-b}.

\begin{proof}[Proof of \eqref{eq:theorem-con-b}]

When we use the leave-one-out function defined in \eqref{eq:MLE-new} to prove  \eqref{eq:theorem-con-b}, we consider two cases for $m$.
(Case I) $m\in [r]$ and (Case II) $m\in \{r+1, \ldots, r+t\}$.
The proofs for the two cases are very similar. Therefore, we present only the proofs for Case I.
In the following, we assume $m\in [r]$.

From Lemma \ref{lemma-existence}, we have $\|\hat{\omega}_{-m}-\omega^*_{-m}\|_{\infty}\leq \|\hat{\omega}-\omega^*\|_{\infty}\leq 5$.
Therefore, $\hat{\omega}_{-m}$ satisfies the constraint in \eqref{eq:con-MLE-l-m}.
From the definition of ${\hat{\omega}}_{-m}^{(m)}$ in \eqref{eq:con-MLE-l-m}, we obtain:
\begin{align*}
\ell^{(-m)}(\hat{\omega}_{-m}) &\geq {\ell}^{(-m)}({\hat{\omega}}_{-m}^{(m)}) \\
&= \ell^{(-m)}(\hat{\omega}_{-m}) + (\hat{\omega}^{(m)}_{-m}-\hat{\omega}_{-m}-\bar{a}_m\mathbbm{1}_{r+t-1})^\top\nabla\ell_n^{(-m)}(\hat{\omega}_{-m}) \\
&+ \frac{1}{2}( \hat{\omega}^{(m)}_{-m}-\hat{\omega}_{-m}-\bar{a}_m\mathbbm{1}_{r+t-1})^\top \nabla^2\ell_n^{(-m)}(\xi)( \hat{\omega}^{(m)}_{-m}-\hat{\omega}_{-m}-\bar{a}_m\mathbbm{1}_{r+t-1}),
\end{align*}
where $\xi$ is a convex combination of $\omega^{(m)}_{-m}$ and $\hat{\omega}_{-m}$,
\[
\bar{a}_m=\mathsf{ave}( \hat{\omega}^{(m)}_{-m}-\hat{\omega}_{-m}).
\]
As both $\hat{\omega}^{(m)}_{-m}$ and $\hat{\omega}_{-m}$ satisfy the constraint in  \eqref{eq:con-MLE-l-m}, we must have $\|\xi-\omega_{-m}^*\|_{\infty}\leq 5$.
Then, we can apply Lemma \ref{le:second-min-eigenvalue} to the subset of the data and obtain:
\begin{equation*}
\begin{split}
&~~\frac{1}{2}( \hat{\omega}^{(m)}_{-m}-\hat{\omega}_{-m}-\bar{a}_m\mathbbm{1}_{r+t-1})^\top\nabla^2\ell_n^{(-m)}(\xi)
( \hat{\omega}^{(m)}_{-m} - \hat{\omega}_{-m} - \bar{a}_m\mathbbm{1}_{r+t-1})
\\
\geq &~~ \frac{1}{8\kappa}rp\|\hat{\omega}^{(m)}_{-m}-\hat{\omega}_{-m}-\bar{a}_m\mathbbm{1}_{r+t-1}\|_2^2.
\end{split}
\end{equation*}
Using the Cauchy--Schwarz inequality, we obtain:
\begin{align}\label{eq:yinli1}
\|\hat{\omega}^{(m)}_{-m}-\hat{\omega}_{-m}-\bar{a}_m\mathbbm{1}_{r+t-1}\|_2\leq \frac{\|\nabla\ell^{(-m)}(\hat{\omega}_{-m})\|_2}{\frac{1}{8\kappa }rp}.
\end{align}

We now bound $\|\nabla\ell^{(-m)}(\hat{\omega}_{-m})\|_2$.
From the definitions of ${\ell}^{(-m)}(\omega_{-m})$, $\nabla\ell(\omega)$ and $\nabla{\ell}^{(m)}(\omega_m|\omega_{-m})$ in \eqref{eq:MLE-new},
we have:
\begin{align*}
\nabla {\ell}^{(-m)}(\omega_{-m}) = \nabla\ell(\omega) - \nabla{\ell}^{(m)}(\omega_m|\omega_{-m}).
\end{align*}
Because $\nabla\ell(\hat{\omega})=0$, we have:
\begin{align*}
\nabla{\ell}^{(-m)}(\omega_{-m})_{|\omega  = \hat \omega }=-\nabla{\ell}^{(m)}(\omega_m|\omega_{-m})_{|\omega  = \hat \omega },
\end{align*}
Where:
\begin{align*}
\nabla {\ell ^{(m)}}{({\omega _m}|{\omega _{ - m}})_{|\omega  = \hat \omega }} = \sum\limits_{i \in [r]} {\frac{{\partial {\ell ^{(m)}}{{({\omega _m}|{\omega _{ - m}})}_{|\omega  = \hat \omega }}}}{{\partial {\omega _i}}}{e_i}}  + \sum\limits_{j \in [t]} {\frac{{\partial {\ell ^{(m)}}{{({\omega _m}|{\omega _{ - m}})}_{|\omega  = \hat \omega }}}}{{\partial {\omega _{j + r}}}}{e_{j + r}}}.
\end{align*}
Together with \eqref{eq:yinli1}, this yields:
\begin{align}\label{eq:yinli1-b}
\|\hat{\omega}^{(m)}_{-m}-\hat{\omega}_{-m}-\bar{a}_m\mathbbm{1}_{r+t-1}\|_2
\leq \frac{ \|{\ell ^{(m)}}{({\hat{\omega} _m}|{\hat{\omega} _{ - m}})}\|_2 }{\frac{1}{8\kappa}rp}.
\end{align}

The following calculations are based on the event $\mathcal A_0$ defined in \eqref{eq:A0}.
Note that, when $m\in [r]$,
\begin{eqnarray*}
\frac{{\partial {\ell ^{(m)}}{{({\omega _m}|{\omega _{ - m}})}_{|\omega  = \hat \omega }}}}{{\partial {\omega _i}}} & = & 0,                                  \quad i\in[r]\backslash\{m\},
\\
\frac{{\partial {\ell ^{(m)}}{{({\omega _m}|{\omega _{ - m}})}_{|\omega  = \hat \omega }}}}{{\partial {\omega _{j + r}}}}
& = & X_{m,j+r}\left({ {a_{m,j+r}}-\mu ({{\hat \omega }_m} - {{\hat \omega }_{j+r}})}\right), \quad    j\in[t].
\end{eqnarray*}
It follows that, when $m\in [r]$,
\begin{eqnarray*}
\| {\ell ^{(m)}}{({\hat{\omega}_m}|{\hat{\omega} _{ - m}})}\|_2^2
& = &  \sum_{j\in[t]}{X_{m,j+r}}\left( {\mu ({{\hat \omega }_m} - {{\hat \omega }_{j+r}}) - {a_{m,j+r}}} \right)^2
\\
& \leq & 2\sum_{j\in[t]}X_{m,j+r}(\mu({{\hat \omega }_m} - {{\hat \omega }_{j+r}})-\mu(\omega_m^*-\omega_{j+r}^*))^2
\\
&  & +  2\sum_{j\in[t]}{X_{m,j+r}}\left( {\mu (\omega_m^*-\omega_{j+r}^*) - {a_{m,j+r}}} \right)^2
\\
&\leq &  \frac{8}{16}\|\hat{\omega}-\omega^*\|_{\infty}^2\sum_{j\in[t]}X_{m,j+r} +2\sum_{j\in[t]}X_{m,j+r}\left( {\mu (\omega_m^*-\omega_{j+r}^*) - {a_{m,j+r}}} \right)^2
\\
& \leq &  \frac{3}{4}tp\|\hat{\omega}-\omega^*\|_{\infty}^2
+2\sum_{j\in[t]}X_{m,j+r}\left( {\mu (\omega_m^*-\omega_{j+r}^*) - {a_{m,j+r}}} \right)^2.
\end{eqnarray*}
Together with \eqref{eq:yinli1-b}, this yields:
\begin{align}
\nonumber
&~~\mathop {\max }\limits_{m \in [r]} \left\| {\omega _{ - m}^{(m)} - {{\hat \omega }_{ - m}} - {{\bar a}_m}{\mathbbm{1}_{r + t - 1}}} \right\|_2^2
\\
\nonumber
\le & ~~\frac{{\mathop {\max }\limits_{m \in [r]} \sum\limits_{j \in [t]} {{X_{m,j + r}}{{\left( {\mu (\omega _m^* - \omega _{j + r}^*) - {a_{m,j + r}}} \right)}^2}} }}{{\frac{1}{{32b_n^2}}{r^2}{p^2}}} + \frac{{\left\| {\hat \omega  - {\omega ^*}} \right\|_\infty ^2}}{{\frac{{{r^2}p}}{{12b_n^2t}}}}
\\
\label{eq-yinli1-c}
\lesssim &~~ \frac{b_n^2\log r}{ rp } + \frac{ b_n^2{\left\| {\hat \omega  - {\omega ^*}} \right\|_\infty ^2}}{ rp }
\end{align}
where the last inequality is due to \eqref{le:ball12-3}.

By the definition of $\hat{\omega}$, $\ell(\hat{\omega})=\min_{\omega:\mathbbm{1}_{r+t}^\top\omega=0}\ell(\omega)$,
Because $\ell(\omega)=\ell(\omega+c\mathbbm{1}_{r+t})$ for any $c\in\mathbb{R}$, $\ell(\hat{\omega})=\min_{\omega}\ell(\omega)$.
Observe that:
$$\ell^{(m)}(\omega_m^*|\hat{\omega}_{-m}) + \ell^{(-m)}(\hat{\omega}_{-m})\geq\ell(\hat{\omega})=\ell^{(m)}(\hat{\omega}_m|\hat{\omega}_{-m}) + \ell^{(-m)}(\hat{\omega}_{-m}).$$
Using a second-order Taylor expansion, we obtain:
\begin{eqnarray}
\label{ineq-two-lm}
\begin{array}{rcl}
\ell^{(m)}(\omega_m^*|\hat{\omega}_{-m}) &\geq & \ell^{(m)}(\hat{\omega}_m|\hat{\omega}_{-m})
\\
&= & \ell^{(m)}(\omega_m^*|\hat{\omega}_{-m}) + (\hat{\omega}_m-\omega_m^*)\varphi ^{(m)}(\omega_m^*|\hat{\omega}_{-m})
+ \frac{1}{2}(\hat{\omega}_m-\omega_m^*)^2\psi^{(m)}(\xi|\hat{\omega}_{-m}),
\end{array}
\end{eqnarray}
where $\xi$ denotes a convex combination of $\omega_m^*$ and $\hat{\omega}_m$;
From Lemma \ref{lemma-existence}, we have:
\begin{align*}
|\xi-\omega_m^*|\leq |\hat{\omega}_m-\omega_m^*|\leq \|\hat{\omega}-\omega^*\|_{\infty}\leq 5.
\end{align*}
Therefore, for any $i\in[r+t]\backslash\{m\}$, we have:
\begin{align*}
|\xi-\hat{\omega}_i|\leq |\xi-\omega_m^*|+|\omega_m^*-\omega_i^*|+|\hat{\omega}_i-\omega_i^*|\leq 10 + \|\omega^*\|_\infty.
\end{align*}
Together with Lemma \ref{le:second-min-eigenvalue} and the expression for $\psi^{(m)}$ in \eqref{def-psi-m}, this yields:
\[
\frac{1}{2}\psi^{(m)}(\xi|\hat{\omega}_{-m})\gtrsim \frac{1}{b_n}rp.
\]
It follows from \eqref{ineq-two-lm} that:
\begin{align}
 \label{eq:fixec}
|\hat{\omega}_m-\omega_m^*| \lesssim \frac{ |\varphi^{(m)}(\omega_m^*|\hat{\omega}_{-m})| }{ (\frac{1}{b_n}rp) }.
\end{align}
We now find that $\varphi^{(m)}(\omega_m^*|\hat{\omega}_{-m})$.
When $m\in [r]$, we have:
\begin{align}
\nonumber
|{\varphi ^{(m)}}(\omega _m^*|{{\hat \omega }_{ - m}})| &= \Big| {\sum\limits_{j \in [t]} {{X_{m,j + r}}\left( { - {a_{m,j + r}} + \mu \left( {\omega _m^* - {{\hat \omega }_{j + r}}} \right)} \right)} } \Big|
\\
\nonumber
& \le
\Big|
\underbrace{
 {\sum\limits_{j \in [t]} {{X_{m,j + r}}\left( { - {a_{m,j + r}} + \mu \left( {\omega _m^* - \omega _{j + r}^*} \right)} \right)} }
}_{S_{m,1}}
\Big|
\\
\nonumber
&\quad +
\Big|
\underbrace{
{\sum\limits_{j \in [t]} {{X_{m,j + r}}\left( { - \mu \left( {\omega _m^* - \omega _{j + r}^*} \right) +
\mu \left( {\omega _m^* - \hat{\omega}_{j + r}^{(m)} + {a_m}} \right)} \right)} }
}_{S_{m,2}}
\Big|
\\
\label{ineq-sm123}
&\quad + \Big|
\underbrace{ {\sum\limits_{j \in [t]} {{X_{m,j + r}}\left( { - \mu \left( {\omega_m^* - \hat{\omega}_{j + r}^{(m)} + {a_m}} \right) + \mu \left( {\omega _m^* - {{\hat \omega }_{j + r}}} \right)} \right)} }
}_{S_{m,3}}
\Big|.
\end{align}
First, we bound $S_{m,2}$. From Lemma \ref{le:ball-p}, we have:
\begin{eqnarray}
\nonumber
|S_{m,2}| & \le &  p\left| {\sum\limits_{j \in [t]} {\left( { - \mu \left( {\omega _m^* - \omega _{j + r}^*} \right) + \mu \left( {\omega _m^* - \hat{\omega}_{j + r}^{(m)} + {a_m}} \right)} \right)} } \right|
\\
\nonumber
 &  &  + \left| {\sum\limits_{j \in [t]} {\left( {{X_{m,j + r}} - p} \right)\left( { - \mu \left( {\omega _m^* - \omega _{j + r}^*} \right) + \mu \left( {\omega _m^* - \hat{\omega}_{j + r}^{(m)} + {a_m}} \right)} \right)} } \right|
 \\
 \nonumber
& = &  p\left| {\sum\limits_{j \in [t]} {\frac{{{e^{{\xi _j}}}}}{{{{\left( {1 + {e^{{\xi _j}}}} \right)}^2}}}\left( {\left( {\omega _m^* - \hat{\omega}_{j + r}^{(m)} + {a_m}} \right) - \left( {\omega _m^* - \omega _{j + r}^*} \right)} \right)} } \right|
\\
\nonumber
 &  &  + \left| {\sum\limits_{j \in [t]} {\left( {{X_{m,j + r}} - p} \right)\frac{{{e^{{\xi _j}}}}}{{{{\left( {1 + {e^{{\xi _j}}}} \right)}^2}}}\left( {\left( {\omega _m^* - \hat{\omega}_{j + r}^{(m)} + {a_m}} \right) - \left( {\omega _m^* - \omega _{j + r}^*} \right)} \right)} } \right|
 \\
 \nonumber
& \lesssim &  \Big\{ \frac{p\sqrt t }{{{c_n}}}  + \frac{1}{c_n} \sqrt{ \sum_{j\in [t]} (X_{m,j+r}-p)^2 } \Big\}
\cdot
{\left\| {\omega _{ - m}^* - \omega _{ - m}^{(m)} + {a_m}{1_{r + t - 1}}} \right\|_2}
\\
\label{ineq-Sm2}
& \lesssim &  \frac{\sqrt{tp} }{{{c_n}}}
\cdot
{\left\| {\omega _{ - m}^* - \omega _{ - m}^{(m)} + {a_m}{1_{r + t - 1}}} \right\|_2},
\end{eqnarray}
where the last inequality is based on Lemma \ref{le:ball-p}.
Similarly, we also have:
\begin{equation}\label{ineq-Sm3}
|S_{m,3}| \lesssim \frac{\sqrt{tp} }{{{c_n}}}
\cdot
{\left\| {\omega _{ - m}^* - \omega _{ - m}^{(m)} + {a_m}{1_{r + t - 1}}} \right\|_2}.
\end{equation}
Combining \eqref{ineq-sm123}, \eqref{ineq-Sm2}, and \eqref{ineq-Sm3} yields:
\begin{align}
\nonumber
{\left\| {\hat \omega  - {\omega ^*}} \right\|_\infty } & \le \frac{{\mathop {\max }\limits_{m \in [r]} \left| {\sum\limits_{j \in [t]} {{X_{m,j + r}}\left( { - {a_{m,j + r}} + \mu \left( {\omega _m^* - \omega_{j + r}^*} \right)} \right)} } \right|}}{{\frac{1}{{4{b_n}}}rp}}
\\
\nonumber
& + \frac{\sqrt{tp} }{ (c_n/b_n) rp }
\cdot
{\left\| {\omega _{ - m}^* - \omega _{ - m}^{(m)} + {a_m}{1_{r + t - 1}}} \right\|_2}
\\
\label{ineq-final-c}
& \lesssim  b_n \sqrt{ \frac{ \log r}{ rp } } + \frac{\sqrt{tp} }{ (c_n/b_n) rp }
\cdot
{\left\| {\omega _{ - m}^* - \omega _{ - m}^{(m)} + {a_m}{1_{r + t - 1}}} \right\|_2}
\end{align}
where the last inequality is due to \eqref{le:ball12-1}.

Recall that $\bar{a}_m=\mathsf{ave}( \hat{\omega}^{(m)}_{-m}-\hat{\omega}_{-m})$
$a_m=\mathsf{ave}({\hat{\omega}}_{-m}^{(m)}-\omega_m^*)$
 $\mathbbm{1}_{r+t}^\top \hat{\omega}=\mathbbm{1}_{r+t}^\top\omega^*=0$.
Therefore, we have:
\begin{align*}
\left\| {{a_m}{\mathbbm{1}_{r + t - 1}} - {{\bar a}_m}{\mathbbm{1}_{r + t - 1}}} \right\|_2^2 &=
 \left\| {\mathsf{ave}(\hat{\omega} _{ - m}^{(m)} - \omega _m^*){\mathbbm{1}_{r + t - 1}} - \mathsf{ave}(\hat{\omega} _{ - m}^{(m)} - {{\hat \omega }_{ - m}}){\mathbbm{1}_{r + t - 1}}} \right\|_2^2\\
& = (r + t - 1){(\mathsf{ave}({{\hat \omega }_{ - m}} - \omega _{ - m}^*))^2} \le \frac{{\left\| {\hat \omega  - {\omega ^*}} \right\|_\infty ^2}}{{r + t - 1}}.
\end{align*}
This, together with \eqref{eq-yinli1-c} and \eqref{ineq-final-c}, yields:
\[
\max_{m\in [r]} \left\| {\omega _{ - m}^* - \omega _{ - m}^{(m)} + {a_m}{1_{r + t - 1}}} \right\|_2^2
\lesssim \frac{ b_n^2 \log r }{ rp }  + \max_{m\in [r]} \frac{ 1 }{ (c_n/b_n)^2 rp } \left\| {\omega _{ - m}^* - \omega _{ - m}^{(m)} + {a_m}{1_{r + t - 1}}} \right\|_2^2
\]
which yields:
\[
\max_{m\in [r]} \left\| {\omega _{ - m}^* - \omega _{ - m}^{(m)} + {a_m}{1_{r + t - 1}}} \right\|_2^2
\lesssim \frac{ b_n^2 \log r }{ rp }.
\]
Substituting this back into \eqref{ineq-final-c} yields \eqref{eq:theorem-con-b}.
This completes the proof.
\end{proof}

\subsection{Proof of Theorem \ref{th:CL}}
\label{subsec-th2-proof}

Before proving Theorem \ref{th:CL}, we give some necessary lemmas.

\begin{lemma}
\label{le:a1a2}
Let $1_{\{\cdot\}}$  denote the indicator function.
For $i\ne j$,  define
\begin{eqnarray}
\begin{array}{c}
\xi _{ij} = \sum\limits_{k \in [r+t-1]} 1_{ \{ X_{i,k} > 0, X_{j,k} > 0 \} }, \\
{E_{n1}}:=\big\{ \min\limits_{i,j \in [r],i \ne j} \xi_{ij} \ge \frac{1}{2}tp^2 \big\},
\quad
{E_{n2}}:=\big\{ \min\limits_{i-r \in [t], j-r\in[t],i \ne j} \xi_{ij} \ge \frac{1}{2}rp^2 \big\},
\end{array}
\end{eqnarray}
Then, we have
\begin{eqnarray}
\label{eq:A1}
\mathbb{P}( {E_{n1}} ) & \ge  & 1 - \frac{{r(r - 1)}}{2}{e^{ - \frac{1}{8}tp^2}},
\\
\label{eq:A2}
\mathbb{P}({E_{n2}})  & \ge   & 1 - \frac{{t(t - 1)}}{2}{e^{ - \frac{1}{8}rp^2}}.
\end{eqnarray}
\end{lemma}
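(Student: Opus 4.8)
The plan is to recognise $\xi_{ij}$ as a sum of independent Bernoulli variables and then combine a multiplicative Chernoff bound for its lower tail with a union bound over pairs of nodes. First I would note that, since each $X_{i,k}\in\{0,1\}$, the indicator $1_{\{X_{i,k}>0,\, X_{j,k}>0\}}$ equals the product $X_{i,k}X_{j,k}$, so $\xi_{ij}$ simply counts the common neighbours of nodes $i$ and $j$ in the bipartite graph $G_{r,t}$. For two distinct individuals $i,j\in[r]$, the factor $X_{i,k}$ vanishes unless $k$ is an item node (recall $X_{i,k}=0$ whenever both indices lie in $[r]$), so the only nonzero summands come from the $t$ item indices. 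For each such item $k$, the factors $X_{i,k}$ and $X_{j,k}$ are independent $\mathrm{Bernoulli}(p)$ variables, whence their product is $\mathrm{Bernoulli}(p^2)$; and the products across distinct $k$ are mutually independent because they involve disjoint entries of $X$. Thus $\xi_{ij}$ is a sum of $t$ i.i.d.\ $\mathrm{Bernoulli}(p^2)$ variables with mean $tp^2$.

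Next I would apply the standard multiplicative Chernoff bound: for a sum $S$ of independent Bernoulli variables with mean $\mu$ and any $\delta\in(0,1)$, $\mathbb{P}(S\le(1-\delta)\mu)\le\exp(-\mu\delta^2/2)$. Choosing $\delta=1/2$ and $\mu=tp^2$ gives, for each fixed pair $i\ne j$ in $[r]$,
\[
\mathbb{P}\Big( \xi_{ij} < \tfrac{1}{2}\, tp^2 \Big) \le \exp\Big( -\tfrac{1}{8}\, tp^2 \Big).
\]
A union bound over the $r(r-1)/2$ unordered pairs of distinct individuals then yields $\mathbb{P}(E_{n1}^c)\le\tfrac{r(r-1)}{2}\exp(-\tfrac{1}{8}tp^2)$, which is exactly \eqref{eq:A1}.

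The bound \eqref{eq:A2} follows by the symmetric argument with the roles of individuals and items exchanged: for two distinct item nodes the common neighbours are the $r$ individual nodes, so $\xi_{ij}$ is a sum of $r$ i.i.d.\ $\mathrm{Bernoulli}(p^2)$ variables with mean $rp^2$, and one unions over the $t(t-1)/2$ pairs of items. There is no deep obstacle in this lemma; the only points needing care are (i) verifying that the product indicators are genuinely independent across $k$, which reduces to the disjointness of the underlying $X$-entries, and (ii) tracking the constant $1/8=\delta^2/2$ produced by the choice $\delta=1/2$, together with checking that the precise index set in the definition of $\xi_{ij}$ (which excludes node $1$, an individual) leaves the effective number of item summands for individual pairs untouched and reduces the individual count for item pairs by only one node, harmlessly absorbed into the stated constants.
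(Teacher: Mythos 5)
Your proposal is correct and is essentially the standard argument the paper relies on: writing the indicator as the product $X_{i,k}X_{j,k}$ of independent $\mathrm{Bernoulli}(p)$ entries, recognising $\xi_{ij}$ as a binomial sum with mean $tp^2$ (resp.\ $rp^2$), applying the lower-tail multiplicative Chernoff bound with $\delta=1/2$ to get the exponent $-\mu/8$, and taking a union bound over the $r(r-1)/2$ (resp.\ $t(t-1)/2$) pairs, which reproduces the stated constants exactly. The only caveat is your reading of the summation range: $[r+t-1]=\{1,\dots,r+t-1\}$ literally omits the last item node rather than node $1$, so the one-summand discrepancy lands on the individual-pair case instead of the item-pair case, but as you note this is harmlessly absorbed and does not affect the substance of the argument.
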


Recall that  $S=(s_{ij})_{i,j\in [r+t]\backslash \{ 1\} }$, where
\[
s_{ij}=\frac{\delta_{ij}}{v_{ii}}+\frac{1}{v_{11}}.
\]
In the above equation, $\delta_{ij}$ is the Kronecker delta function, i.e., $\delta_{ij}=1$ if $i = j$; otherwise, $\delta_{ij}=0$.

\begin{lemma}
\label{le:V-matrix-bound}
Assume that $1/b_n \le  \mu^\prime( \alpha_i - \beta_j ) \le 1/c_n$ for all $i\in [r]$ and $j\in [t]$, where $b_n \ge c_n \ge 4$.
Under Condition \ref{condi-t-r},  we have
\begin{equation}
\label{eq:V-matrix-bound-a}
\| V^{-1} - S \|_{\max} \le \frac{  12 b_n^3  }{  r^2 p^2 c_n^2  },
\end{equation}
with probability $1 - O( r^{-5} )$.
\end{lemma}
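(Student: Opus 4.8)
The plan is to exploit the bipartite block structure of $V$ together with a Schur-complement reduction, which turns the question about the bipartite Fisher information into two questions about one-mode ``projected'' matrices whose weights are controlled by the co-degree bounds of Lemma~\ref{le:a1a2}. With node $1$ (the first individual) removed, $V$ has the form $\left(\begin{smallmatrix} D_1 & -W \\ -W^\top & D_2\end{smallmatrix}\right)$, where $D_1,D_2$ collect the weighted degrees $v_{ii}$ and $W_{ij}=X_{i,j+r}\mu'(\theta_i-\theta_{j+r})\ge 0$. First I would invert $V$ by blocks, expressing $V^{-1}$ through the Schur complements $M_1=D_1-WD_2^{-1}W^\top$, $M_2=D_2-W^\top D_1^{-1}W$, and the off-diagonal block $M_1^{-1}WD_2^{-1}$. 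A direct computation shows $M_1$ is itself a grounded Laplacian on the individuals: its off-diagonal entries are $-(M_1)_{ik}=\sum_j W_{ij}W_{kj}/v_{r+j,r+j}$, a co-degree-weighted overlap of $i$ and $k$, while its row sums equal $\rho_i=\sum_j W_{ij}W_{1j}/v_{r+j,r+j}\ge 0$, the overlap with the removed node; moreover $\sum_i\rho_i=v_{11}-\sum_j W_{1j}^2/v_{r+j,r+j}=v_{11}(1+o(1))$, which is what will make the reference constant $1/v_{11}$ in $S$ the correct one.

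The point of the reduction is that, unlike $V$ (whose within-group off-diagonal entries vanish), the projected matrices $M_1,M_2$ have \emph{strictly positive} off-diagonal weights bounded below: by Lemma~\ref{le:a1a2} every pair of individuals shares at least $\tfrac12 tp^2$ items, so each projected weight is $\gtrsim c_n p/b_n^2$, and by Lemma~\ref{le:A0} the diagonals satisfy $(M_1)_{ii}\asymp v_{ii}\asymp rp$. Thus $M_1$ and $M_2$ belong to the class of diagonally dominant matrices for which a generalized-inverse approximation is available. I would apply that approximation to obtain $M_1^{-1}=\diag(1/(M_1)_{ii})+\tfrac1{v_{11}}\mathbf{1}\mathbf{1}^\top+E_1$ with $\|E_1\|_{\max}$ of order $(rp)^{-2}$ (the grounded degree $\sum_i\rho_i$ matching $v_{11}$), an analogous statement for $M_2$, and a parallel estimate $M_1^{-1}WD_2^{-1}=\tfrac1{v_{11}}\mathbf{1}\mathbf{1}^\top+E_{12}$ for the cross block. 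Reassembling the blocks and comparing entrywise with $S$, whose diagonal blocks are $\diag(1/v_{ii})+\tfrac1{v_{11}}\mathbf{1}\mathbf{1}^\top$ and whose off-diagonal block is $\tfrac1{v_{11}}\mathbf{1}\mathbf{1}^\top$, would give a bound of the stated order $b_n^3/(r^2p^2c_n^2)$ once the constants $b_n,c_n$ are tracked through.

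The delicate point, and the main obstacle, is the all-ones mode. Since the full Hessian is a Laplacian, the reduced matrix satisfies $V\mathbf{1}=-v_{\cdot 1}$, equivalently $V^{-1}v_{\cdot 1}=-\mathbf{1}$, so $\mathbf{1}$ is exactly the dangerous direction: $\lambda_{\min}(V)$ is only of order $p$, and any estimate that routes the constant mode through $\lambda_{\min}(V)$ loses a factor $r$. The content of the lemma is precisely that $V^{-1}$ reproduces the constant $1/v_{11}$ along this mode up to second order, which cannot come from a crude operator-norm bound; it requires the spectral gap on $\mathbf{1}^{\perp}$, namely $\lambda_{\min,\perp}(V)\gtrsim rp/\kappa$ from Lemma~\ref{le:second-min-eigenvalue}, together with the density of the projected graph from Lemma~\ref{le:a1a2}. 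I expect the hard part to be (i) proving the generalized-inverse approximation for the grounded one-mode Laplacians $M_1,M_2$ to the sharp $O((rp)^{-2})$ entrywise accuracy, where the accumulated off-diagonal mass must be controlled through the co-degree lower bounds rather than bounded term by term, and (ii) converting the resulting spectral/$\ell_2$ control into the entrywise $\|\cdot\|_{\max}$ bound while keeping the dependence on $b_n,c_n$ explicit. The probability $1-O(r^{-5})$ then follows by intersecting the high-probability events of Lemmas~\ref{le:A0}, \ref{le:second-min-eigenvalue} and \ref{le:a1a2}.
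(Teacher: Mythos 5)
Your reduction is set up correctly as far as it goes: the block inversion of $V$ through the Schur complements $M_1=D_1-WD_2^{-1}W^\top$ and $M_2=D_2-W^\top D_1^{-1}W$, the identification of $M_1$ as a grounded Laplacian with row sums $\rho_i=\sum_j W_{ij}W_{1j}/v_{r+j,r+j}$ and $\sum_i\rho_i=v_{11}\bigl(1+O(1/(rp))\bigr)$, and the use of Lemma \ref{le:a1a2} to lower-bound the projected weights are all sound; the cross-block comparison with $\tfrac{1}{v_{11}}\mathbf{1}\mathbf{1}^\top$ and the substitution of $1/v_{ii}$ for $1/(M_1)_{ii}$ also come out at the claimed order \emph{provided} the diagonal blocks are handled. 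But the proposal does not prove the lemma: all of its quantitative content is deferred to your step (i), the entrywise $O\bigl((rp)^{-2}\bigr)$ inverse approximation for the projected grounded Laplacians, which you yourself flag as the hard part and leave open. That step is not available off the shelf, and the natural tool for it does not deliver the stated rate.

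Concretely, the projected matrices are badly inhomogeneous. By your own estimates the off-diagonal weights of $M_1$ range from $m\asymp c_np/b_n^2$ (for pairs sharing only $\sim tp^2/2$ items, the floor from Lemma \ref{le:a1a2}) up to $M\asymp b_n/c_n^2$ (for pairs sharing $\sim tp$ items), so $M/m\asymp b_n^3/(c_n^3p)$ diverges as $p\to0$. The standard diagonally dominant inverse-approximation propositions for the class with off-diagonal entries in $[m,M]$ --- which is what "a generalized-inverse approximation is available" for --- give an entrywise error of order $M^2/(m^3r^2)\asymp b_n^8/(c_n^7p^3r^2)$, exceeding the claimed $12b_n^3/(c_n^2r^2p^2)$ by a factor of order $b_n^5/(c_n^5p)$, which is unbounded in the sparse regime (and would also weaken the range of $p$ for which Theorem \ref{th:CL} holds). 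To reach $O(1/(r^2p^2))$ one needs an error bound governed by averaged quantities such as $\max_i\sum_{k}(M_1)_{ik}^2/(M_1)_{ii}^3$ rather than by $M^2/m^3$ applied entry by entry; establishing such a sharpened statement for these inhomogeneous grounded Laplacians is essentially as hard as Lemma \ref{le:V-matrix-bound} itself, and it is precisely what is missing. Until that estimate is supplied (or the Schur-complement detour is replaced by an argument working directly on the bipartite matrix $V$, using the co-degree events of Lemma \ref{le:a1a2} inside the entrywise analysis of $V^{-1}-S$), the proposal is a reduction of the lemma to another unproven statement rather than a proof.
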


\begin{lemma}
\label{lemma-mul-clt}
Let $\bm{d}=(d_2,\ldots,d_{r+t})^\top$.
For two fixed integer $k_1$ and $k_2$,
the vector $ ( [S(\bm{d}-{\mathbb{E}}(\bm{d}))]_1, \ldots, [S(\bm{d}-{\mathbb{E}}(\bm{d}))]_{k_1},
[S(\bm{d}-{\mathbb{E}}(\bm{d}))]_{r+1}, \ldots, [S(\bm{d}-{\mathbb{E}}(\bm{d}) ) ]_{r+k_2} )^\top$
asymptotically follow a multivariate normal distribution with the mean $\mathbf{0}$ and the covariance matrix $\Sigma=(\sigma_{ij}^2)$ with the diagonal elements
\[
\sigma_{ii}^2  = \begin{cases} \frac{ 1 }{ v_{ii} } + \frac{1}{ v_{11} },  &   i \in [k_1],
\\
 \frac{ 1 }{ v_{i+r, i+r} } + \frac{1}{ v_{11} },  &   i \in [k_2],
\end{cases}
\]
and all non-diagonal elements $1/v_{11}$.
\end{lemma}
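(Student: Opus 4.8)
The plan is to combine the Cramér--Wold device with a Lyapunov central limit theorem for sums of independent summands, after first using the explicit form of $S$ to rewrite each selected coordinate as a difference of two degree fluctuations. Write $g_l := (\bm{d} - \mathbb{E}(\bm{d}))_l$ for $l \in [r+t]$, and recall that $\bm{d} - \mathbb{E}(\bm{d})$ is, up to the usual identification, the score $-\nabla \ell(\omega^*)$, so that by the translation invariance $\ell(\omega + c\mathbbm{1}_{r+t}) = \ell(\omega)$ exhibited in \eqref{eq:keshibie} we have $\mathbbm{1}_{r+t}^\top(\bm{d} - \mathbb{E}(\bm{d})) = 0$. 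Since $s_{kl} = \delta_{kl}/v_{kk} + 1/v_{11}$, the $k$-th entry of $S(\bm{d} - \mathbb{E}(\bm{d}))$ equals $g_k/v_{kk} + v_{11}^{-1}\sum_{l \ne 1} g_l$, and this orthogonality identity collapses the common term into $\sum_{l\ne 1} g_l = -g_1$. Hence each selected coordinate reduces to the clean form $g_k/v_{kk} - g_1/v_{11}$.

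First I would reduce the multivariate claim to a one-dimensional one: by the Cramér--Wold device it suffices to prove that, for every fixed coefficient vector $\bm{c}$, the scalar $\sum_k c_k\,(g_k/v_{kk} - g_1/v_{11})$ is asymptotically normal with variance $\bm{c}^\top \Sigma \bm{c}$. Each $g_l$ is a sum over the observed edges of the centered responses $\epsilon_{ij} := a_{i,j+r} - \mu(\omega_i^* - \omega_{j+r}^*)$, which, conditional on $G_{r,t}$, are independent, mean zero, bounded by $1$, and have variance $\mu'(\omega_i^* - \omega_{j+r}^*) \in [1/b_n,\,1/c_n]$ by \eqref{ineq-mu-bn-cn}. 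Regrouping, I would write the linear combination as $\sum_{(i,j+r)\in\mathcal{E}} w_{ij}\epsilon_{ij}$, where each weight $w_{ij}$ collects the at most three coordinates $k \in \{i,\, j+r,\, 1\}$ in which edge $(i,j+r)$ participates, so that $|w_{ij}| \lesssim \max_k |c_k|/v_{kk}$.

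Next I would invoke the Lyapunov CLT for the triangular array $\{w_{ij}\epsilon_{ij}\}$. On the high-probability event $\mathcal{A}_0$ of Lemma \ref{le:A0} we have $v_{ii} \asymp tp$ for individuals and $v_{j+r,j+r} \asymp rp$ for items, whence $\max_{ij}|w_{ij}| \lesssim 1/(rp)$ while the total variance $\sum_{ij} w_{ij}^2 \mu'(\omega_i^*-\omega_{j+r}^*) = \bm{c}^\top \Cov(\cdot)\bm{c} \asymp 1/(rp)$. The Lyapunov ratio (taking third moments) is then controlled by $\max_{ij}|w_{ij}|\,/\,(\sum_{ij} w_{ij}^2)^{1/2} \lesssim (rp)^{-1/2}$, which tends to $0$ because Condition \ref{condi-pr} forces $rp \gtrsim \log r \to \infty$; this yields univariate asymptotic normality.

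Finally I would identify the limiting covariance. Using $\Var(g_l) = v_{ll}$, the fact that $\Cov(g_k, g_{k'}) = 0$ whenever $k,k'$ are both individuals or both items (their defining edge sets are disjoint), and $\Cov(g_k, g_{k'}) = O(1)$ when they share a single edge, a direct expansion of $\Cov(g_k/v_{kk} - g_1/v_{11},\, g_{k'}/v_{k'k'} - g_1/v_{11})$ produces diagonal entries $1/v_{kk} + 1/v_{11}$ and off-diagonal entries $1/v_{11}$, up to corrections of order $1/(v_{kk}v_{11})$ that are of strictly smaller order and hence vanish in the limit, matching $\Sigma$. I expect the main obstacle to be precisely this collapse of the common term: without the orthogonality identity $\mathbbm{1}_{r+t}^\top(\bm{d}-\mathbb{E}(\bm{d})) = 0$ the term $v_{11}^{-1}\sum_{l\ne 1} g_l$ would carry order $r/(tp)$ rather than $1/(tp)$, so recognizing and exploiting this cancellation is the crux. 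Once it is in hand, both the Lyapunov verification and the covariance bookkeeping are routine given the degree bounds of Lemma \ref{le:A0} and the pointwise control \eqref{ineq-mu-bn-cn} on $\mu'$.
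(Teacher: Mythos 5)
Your proposal is correct and follows exactly the route the paper's framework is built for: use the explicit form of $S$ together with the orthogonality identity $\mathbbm{1}^\top(\bm{d}-{\mathbb{E}}(\bm{d}))=0$ to collapse each selected coordinate to $g_k/v_{kk}-g_1/v_{11}$, then apply the Cram\'er--Wold device and a Lyapunov CLT to the conditionally independent, bounded, centred responses on the event $\mathcal{A}_0$, with the covariance bookkeeping you describe giving $1/v_{kk}+1/v_{11}$ on the diagonal and $1/v_{11}$ off the diagonal up to negligible $O(1/(v_{kk}v_{11}))$ corrections. Your identification of $\bm{d}-{\mathbb{E}}(\bm{d})$ with the score $-\nabla\ell(\theta^*)$ (so that the item components carry the opposite sign from the raw correct-response counts) is indeed the reading under which the orthogonality identity, and hence the stated covariance, holds --- a literal all-positive-sign reading would give $\mathbbm{1}^\top(\bm{d}-{\mathbb{E}}(\bm{d}))$ equal to twice the centred total count and inflate the common term's variance to order $rtp/v_{11}^2$ --- and it is the convention forced by the relation $\bm{h}-{\mathbb{E}}(\bm{h})=V(\hat\theta-\theta^*)+\bm{g}$ used in the proof of Theorem \ref{th:CL}.
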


We are now ready to prove Theorem \ref{th:CL}.

\begin{proof}[Proof of Theorem \ref{th:CL}]
By the second-order Taylor expansion, for $i\in[r]$ and $j\in[t]$, we have
\begin{eqnarray}
\nonumber
 \mu ({{\hat \theta }_i} - {{\hat \theta }_{j+r}}) - \mu (\theta _i^* - \theta _{j+r}^*)
& = &  \mu '(\theta _i^* - \theta _{j+r}^*)\left({({{\hat \theta }_i} - {{\hat \theta }_{j+r}}) - (\theta _i^* - \theta _{j+r}^*)}\right)
\\
\label{eq:omega_tai}
& & +   \frac{1}{2}\mu ''(\tau_{ij}){\left[ {({{\hat \theta }_i} - {{\hat \theta }_{j+r}}) - (\theta _i^* - \theta _{j+r}^*)} \right]^2},
\end{eqnarray}
where  $\tau_{ij} = \theta _i^* - \theta _{j+r}^* + \gamma_{ij} \left[ {({{\hat \theta }_i} - {{\hat \theta }_{j+r}}) - (\theta _i^* - \theta _{j+r}^*)} \right]$ with $\gamma_{ij}\in [0,1]$.
For simplicity, let
\begin{equation}
\label{eq:gij}
g_{i,j+r}=\frac{1}{2}{X_{i,j+r}}\mu ''({{\tilde \theta }_i} - {{\tilde \theta }_{j+r}}){\left[ {({{\hat \theta }_i} - {{\hat \theta }_{j+r}}) - (\theta _i^* - \theta _{j+r}^*)} \right]^2},
\end{equation}
and
\begin{equation}
\label{definition:gi}
\begin{array}{lcr}
{g_i}  & = &  \sum\limits_{j\in[t]} {{g_{i,j+r}}},  \quad i \in [r],
\\
{g_{j+r}} & = &  \sum\limits_{i\in[r]\backslash\{1\}} {{g_{i,j+r}}}, \quad j \in [t].
\end{array}
\end{equation}
Further, let $\bm{g} = ({g_2}, \ldots ,{g_{r + t}})^{\top}$.
Let
\begin{equation*}
\begin{array}{lcr}
{h_i}-\mathbb{E}^*(h_i) & =  & \sum\limits_{j \in [t]} {{X_{i,j + r}}\mu ({{\hat \theta }_i} - {{\hat \theta }_{j + r}})}-\sum\limits_{j \in [t]} {{X_{i,j + r}}\mu ({{\theta }_i^*} - {{ \theta }_{j + r}^*})}, \quad i\in[r],
\\
h_{j+r}-\mathbb{E}^*(h_{j+r}) & =  & \sum\limits_{i \in [r]} {{X_{i,j + r}}\mu ({{\hat \theta }_i} - {{\hat \theta }_{j + r}})}-\sum\limits_{i \in [r]} {{X_{i,j + r}}\mu ({{\theta }_i^*} - {{ \theta }_{j + r}^*})}, \quad j\in[t],
\end{array}
\end{equation*}
and $\mathbf{h}=(h_2, \ldots, h_{r+t})^\top$, where  $\mathbb{E}^*$ denotes the expectation condition on the sampling matrix $X$.
In what follows, we ignore the superscript ``*" in $\mathbb{E}^*$.
This, together with \eqref{eq-mle-equa} and \eqref{eq:omega_tai}, yields
\begin{equation}
\label{eq:matrix-a}
\bm{h}-{\mathbb{E}}(\bm{h}) = V(\hat \theta  - {\theta ^*}) + \bm{g},
\end{equation}
which is equivalent to
\begin{align}
\hat{\theta}  - \theta^* = {V^{ - 1}}(\bm{h}-{\mathbb{E}}(\bm{h}) ) + {V^{ - 1}}\bm{g} = S(\bm{h}-{\mathbb{E}}(\bm{h}) )
+ (V^{-1}-S)(\bm{h}-{\mathbb{E}}(\bm{h}) )
+ {V^{ - 1}}\bm{g}.
\end{align}
In view of Lemma \ref{lemma-mul-clt}, it is sufficient to prove
\begin{eqnarray}
\label{eq-aim-th2-a}
\big\{ (V^{-1}-S)(\bm{h}-{\mathbb{E}}(\bm{h}) ) \big\}_i & = & o_p( r^{-1/2} ),
\\
\label{eq-aim-th2-b}
 {\left\| {{V^{ - 1}}\bm{g}} \right\|_\infty } & = & {O_p}\left( {\frac{{ \log r}}{{r{p^2}}}} \right) = o_p( r^{-1/2} ).
\end{eqnarray}

We first show \eqref{eq-aim-th2-a}. Let $Z=V^{-1}-S$ and $\bar{h}_i = h_i - \mathbb{E} h_i$.
\begin{eqnarray}
\mathbb{P}( | \sum_{j=1}^{r+t} Z_{ij} \bar{h}_j | \ge \epsilon | X )
& \le & \mathbb{P}( | \sum_{j=1}^{r} Z_{ij} \bar{h}_j | \ge \frac{\epsilon}{2} | X )
+ \mathbb{P}( | \sum_{j=r+1}^{r+t} Z_{ij} \bar{h}_j | \ge \frac{\epsilon}{2} | X ).
\end{eqnarray}
Since $\sum_{j=1}^{r} Z_{ij} \bar{h}_j = \sum_{j=1}^r \sum_{k=r+1}^{r+t} Z_{ij} X_{j k}(a_{j,k}-\mathbb{E} a_{j,k})$,
$\sum_{j=1}^{r} Z_{ij} \bar{h}_j$ can be viewed as the sum of $\sum_{j=1}^r \sum_{k=r+1}^{r+t}X_{j k} $ bounded independent random variables
conditional $X$.
By \citeauthor{hoeffding1963probability}'s (\citeyear{hoeffding1963probability}) inequality, we have
\[
| \sum_{j=1}^r \sum_{k=r+1}^{r+t} Z_{ij} X_{j k}(a_{j,k}-\mathbb{E} a_{j,k}) |
\le 2\sqrt{ \|Z\|_{\max}^2 \sum_{i=1}^r\sum_{j=1}^t X_{i,r+j} \log r}.
\]
with probability at least $1-O(r^{-2})$.  By \citeauthor{chernoff1952measure}'s (\citeyear{chernoff1952measure}) bound, we have
\[
\sum_{i=1}^r\sum_{j=1}^t X_{i,r+j} \le \frac{3}{2} rtp,
\]
with probability at least $ 1- \exp( -rtp/8)$. Combining these,
\[
| \sum_{j=1}^{r} Z_{ij} \bar{d}_j | \le C \|Z\|_{\max}\sqrt{ rtp\log r},
\]
with probability at least $1-O(r^{-2})$. By Lemma \ref{le:V-matrix-bound}, we have
\[
\sum_{j=1}^{r} Z_{ij} \bar{d}_j = O_p\left( \frac{ \sqrt{p\log r} }{ rp^2 } \right) = o_p( r^{-1/2}),
\]
if $p \ge r^{1/4}/\log r$. This shows \eqref{eq-aim-th2-a}.

We now show \eqref{eq-aim-th2-b}.
Note that $\mu^{\prime\prime}(x) = (1-e^x)e^x/( 1 + e^x )^3$.
By Theorem \ref{th:MLE-main}, we have
\begin{equation}
\label{eq:omega_infty}
\| {\hat \theta  - {\theta ^*}} \|_\infty
 = {O_p}\left( {\sqrt {\frac{{\log r}}{{rp}}} } \right).
\end{equation}
Therefore, for $i\in [r], j\in [t]$, we have
\begin{equation*}
 \mu^{\prime\prime}( \tau_{ij} )  \le  \frac{ e^{ \tau_{ij} } }{  ( 1 + e^{ \tau_{ij} } )^2 } \le \frac{1}{4}.
\end{equation*}
According to the definition of $g_{i,j+r}$ in \eqref{eq:gij}, it follows that
\begin{align}
| {{g_{i,j+r}}} | \le \frac{1}{{{2}}}\| {\hat \theta  - {\theta ^*}} \|_\infty ^2.
\end{align}
Therefore,
\begin{equation}
\label{eq:max_gi}
\mathop {\max }\limits_{i\in[r+t]\backslash\{1\}} \left| {{g_i}} \right| = \frac{3}{{{4}}}tp \times {O_p}\left( {\frac{{\log r}}{{rp}}} \right) =  {O}\left( {\frac{{t\log r}}{{r{c_n}}}} \right).
\end{equation}
By a second order Taylor expansion, we have
\begin{equation*}
\begin{split}
{h_1}-{\mathbb{E}}({h_1}) =  - \sum\limits_{j \in [ t] } \Big\{ \mu^{\prime}(\theta_i^*-\theta_{j+r}^*)({{\hat \theta }_{j+r}} - \theta_{j+r}^*)
 + \frac{1}{2}\mu^{\prime\prime}( \tau_{i,j+r} ) ( \hat{\theta}_i - \theta_i^*)^2 \Big\},
\end{split}
\end{equation*}
where $\tau_{i,j+r}$ is a convex combination of $\hat{\theta}_i-\theta_i^*$ and $\hat{\theta}_{j+r}-\theta^*_{j+r}$.
The following calculations are based on the event $\mathcal A_0$ define in \eqref{eq:A0}.
By \eqref{eq:omega_infty}, we have
\begin{align*}
| (h_1- \mathbb{E}(h_1))  + \sum\limits_{j \in [ t] } \Big\{ \mu^{\prime}(\theta_i^*-\theta_{j+r}^*)({{\hat \theta }_{j+r}} - \theta_{j+r}^*) |
= O\left( \frac{ d_{\max} \log r}{ rp} \right) = O\left( \frac{ t\log r}{r} \right).
\end{align*}
According to the definition of $g_i$, we have
\begin{align*}
\sum\limits_{i \in [r + t]\backslash\{1\}} {{g_i}}  = {h_1} - {\mathbb{E}}(h_1) + \sum\limits_{i \in [r + t]\backslash\{1\}} {{v_{i1}}({{\hat \theta }_i} - \theta _i^*)} .
\end{align*}
It follows that
\begin{align}\label{eq:sum_gi}
\sum\limits_{i \in [r + t]\backslash\{1\}} {{g_i}}  = {O}\left( {\frac{{t\log r}}{{r}}} \right).
\end{align}
Combining \eqref{eq:V-matrix-bound-a}, \eqref{eq:max_gi} and  \eqref{eq:sum_gi}, it yields
\begin{align*}
&~ \| V^{-1} \bm{g} \|_\infty  \le \| S\bm{g} \|_\infty + \| ( V^{-1} - S ) \bm{g} \|_\infty
\\
\le &~  \max_{ i\in[r+t]\backslash\{1\} } \frac{| g_i |}{ v_{i,i} }  +
  \frac{1}{v_{1,1}} | \sum\limits_{ i \in [r + t]\backslash\{1\}} g_i  |
  + ( r+ t)\| V^{-1} - S \|_{\max} \| \bm{g} \|_\infty
  \\
  \lesssim & ~
  \frac{ b_n }{rp } \cdot \frac{ t\log r}{ c_n r} + (r+t)\cdot \frac{ b_n^3 }{ c_n^2 r^2 p^2 } \cdot \frac{ t\log r }{ c_n r }
\\
\lesssim & ~ \frac{ b_n^3\log r }{ c_n^3 rp^2},
\end{align*}
where $b_n$ and $c_n$ are defined in Lemma \ref{le:V-matrix-bound}. If $\kappa=O(1)$, then $b_n=O(1)$ and $c_n=O(1)$.
This shows \eqref{eq-aim-th2-b} if $p\gg (r/\log r)^{1/4}$.
It completes the proof.
\end{proof}

\setlength{\itemsep}{-1.5pt}
\setlength{\bibsep}{0ex}
\bibliographystyle{apalike}
\bibliography{ref}

\end{document}